\newtheorem{maintheorem}{Theorem}
\newcommand{\cmt}{\begin{maintheorem}}
\newcommand{\fmt}{\end{maintheorem}}
\newtheorem{theorem}{Theorem}[section]
\newtheorem*{theorem*}{Theorem}
\newtheorem{prop}[theorem]{Proposition}
\newtheorem{lemma}[theorem]{Lemma}
\theoremstyle{definition}
\newtheorem{defn}[theorem]{Definition}
\newtheorem{remark}[theorem]{Remark}
\def\Q{\mathcal Q}
\def\DD{\mathfrak D}
\def\o{\omega}
\def\e{\epsilon}
\subjclass[2010]{37A10, 37C75, 37D25, 37E05}
\keywords{Interval maps, critical points, singularities, statistical stability}
\author{Jos\'e F. Alves}
\address{Jos\'e F. Alves, Departamento de Matem\'atica, Faculdade de Ci\^encias da Universidade do Porto, Rua do Campo Alegre s/n, 4050-637 Porto, Portugal}
\email{jfalves@fc.up.pt}
\urladdr{https://www.fc.up.pt/pessoas/jfalves}
\author{Dalmi Gama}
\address{Dalmi Gama,  Campus Universit\'ario do Tocantins/Camet\'a,
Pr\'edio da Administra\c c\~ao Professora Maria Cordeiro de Castro (2º andar),
Trav. Pe. Ant\'onio Franco 2617 - Bairro da Matinha,
CEP 68400-000    Camet\'a-PA, Brazil}
\email{dalmi@ufpa.br}
\urladdr{http://docente.ufpa.br/dalmirmat}
\author{Stefano Luzzatto}
\address{Stefano Luzzatto, Abdus Salam International Centre for Theoretical Physics (ICTP), Strada Costiera 11, 34151 Trieste, Italy}
\email{stefano@ictp.it}
\urladdr{https://www.stefanoluzzatto.net}
\thanks{JFA was partially supported by  CMUP (UID/MAT/00144/2019), PTDC/MAT-PUR/28177/2017 and   PTDC/MAT-PUR/4048/2021, which are funded by FCT (Portugal) with national (MEC) and European structural funds through the program  FEDER, under the partnership agreement PT2020. The authors are grateful for the support and hospitality of the Abdus Salam International Centre for Theoretical Physics and the Department of Mathematics of the Faculty of Sciences of the University of Porto, where this work was carried out. }
\title[Statistical stability of interval maps]
{Statistical stability of interval maps 
\\ with critical points and singularities}
\date{\today}
\begin{document}

\begin{abstract}
  We prove strong statistical stability of a large class of  one-dimensional  maps which may have  an arbitrary
finite  number of discontinuities and of non-degenerate critical points and/or singular points with infinite derivative, and satisfy some expansivity and bounded recurrence conditions. This generalizes known results for maps with critical points and bounded derivatives and in particular proves statistical stability of Lorenz-like maps with critical points and singularities studied in \cite{LuzTuc99}. We introduce a  natural metric on the space of maps with discontinuities which does not seem to have been used in the literature before.


 
\end{abstract}

\maketitle

\section{Introduction and statement of results}\label{sec_intro}

\subsection{Motivation}
It has been understood for a long time that even  deterministic dynamical systems are not necessarily predictable since, as Poincar\'e \cite{Poi03}  wrote as far back as 1903: ``\emph{it may happen that small differences in initial conditions produce very great one in the final phenomena. [...] Prediction becomes impossible and we have the fortuitous phenomena}''. The  significance of this observation was arguably not fully appreciated until the advent of computers and the work of the meteorologist Lorenz~\cite{Lor63} in the 1960s, who showed how \emph{quickly} this unpredictability can arise. Lorenz coined the term \emph{sensitive dependence on initial conditions} amid the general realization by many physicists that a huge number of naturally arising systems exhibit this characteristic and are therefore, in some sense, \emph{chaotic} and \emph{unpredictable}.

Around the same time, several mathematicians and mathematical physicists, most notably Ruelle and Sinai,  were building on ideas from statistical physics and developing mathematical tools and techniques which they used to prove the extremely remarkable result that, notwithstanding the sensitive dependence on initial conditions,  some very chaotic systems actually exhibit a strong form of \emph{statistical predictability}, in the sense that there exists a probability measure \( \mu \) which describes the asymptotic distribution in space of almost every initial condition, and therefore the \emph{statistics} of the system are \emph{independent of initial conditions}. More formally, given a map \( f: X \to X \) and an initial condition \( x \in X \) we can describe the first \( n \) points in the orbit  of \( x \) with the probability measure 
\[
\mu_{n}(x) :=  \frac 1n \sum_{i=0}^{n-1}\delta_{f^{i}(x)}.
\]
If this sequence of probability measures  converges, i.e. if there exists a limiting probability measure \( \mu \) such that 
\(
\mu_{n}(x) \to \mu
\)
in the weak--star topology as \( n \to \infty\), then for all sufficiently large \( n \) the probability measure \( \mu \) describes up to arbitrarily small errors the statistical distribution of the orbit  of \( x \). We can then define the \emph{basin} 
\(
\mathcal B_{\mu}:= \{x\in X: \mu_{n}(x) \to \mu\}
\)
of \( \mu \) as the set of all points whose statistical distribution is given by \( \mu\) and we say that \( \mu \) is a \emph{physical measure} if \( \mathcal B_{\mu}\) has full, or at least positive, Lebesgue measure. Physical measures do not always exist, the identity map is an easy counterexample, but over the last few decades there has been a large amount of research aimed at establishing the existence \cites{Alv00,Alv20,AlvBonVia00,AlvDiaLuz13,AlvDiaLuz17,AlvLep15,AlvLi15,AlvLuzPin05,AlvPin10,AraPac09,BenYou93,BerZwe13,BluYou19,Bow75,BowRue75,BruDemMel10,BruKelNow96,Bur21,Buz00,BuzCroSar22,Car93,Che99,CliDolPes16,CliLuzPes23,CliLuzPes17,CroYanZha20,Fro98,GanYanYanZhe21,Her18,HofKel82,JakNew95,Led84,Lep98,MarWin16,Met00,PesSin82,Pin06,RivShe10,Rue76,Sin72,Tak12,Tal20,Tsu05,Vec22,Via99,ViaYan13,You02} (or non-existence \cites{AraPin21,BarKirNak20,BerBie22,ColVar01,CroYanZha20,Her18,HofKel95,HofKel90,Kel04,KirLiNak22,KirNakSom22,KirNakSom21,KirNakSom19,KirSom17,KirSom17,Tak94,Tak08,Tal20,Tal20}) of physical measures and the well-known \emph{Palis conjecture} asserts that ``typical'' systems have a finite number of physical measures and that almost every point is in the basin of some physical measure. 

One  reason for which the existence of physical measures is so remarkable and unexpected is that chaotic dynamical systems generally have a very complicated topological structure, with infinitely many periodic points of arbitrarily large periods, points with dense orbits, and points with many different possible omega-limit sets. In some quite special situations such systems are \emph{structurally stable} in the sense that small perturbations of the system essentially preserve this structure and, in a suitable topology,  sufficiently nearby systems are topologically conjugate to each other. In most situations this is however not the case and arbitrarily small perturbations can give rise to multiple bifurcations and completely different topological structures. In both situations, an interesting and relevant question is whether physical measures are persistent and  whether they depend \emph{continuously} on the perturbation. This means that not only the statistics of the system are independent of the initial conditions but also that it is ``robust'' in the sense that small perturbations of the system will give rise to a system which may have a completely different topological structure but for which almost all orbits have a statistical distribution which is almost the same as that of the original system. We then say that the system is \emph{statistically stable}\footnote{We remark that the term  \emph{stochastic stability} can also be found in the literature and usually refers to quite general perturbations where one is allowed to add some ``noise'' or perturb the map in different ways \emph{at each iteration}, thus giving rise to a \emph{random system} to which the notion of \emph{stationary measure} applies. Stochastic stability then refers to the property that the stationary measures converge to the original measure of the unperturbed system as the size of the perturbation goes to 0. Statistical stability, on the other hand, considers the physical measures of nearby systems in some natural topology and refers to the property that  such measures are close. Statistical stability can be thought of as a special case of stochastic stability by choosing a ``constant noise'', i.e. applying the very same perturbation of the original map at each iteration, so that one is effectively iterating a nearby map. 
 }. 

The first result on statistical stability goes back to  \cite{Kel82}  and over the last 20 years there has been a considerable amount of work studying statistical stability in a variety of settings in both one-dimensional and higher-dimensional systems \cites{AlvKha19,Alv20,Alv04,AlvCarFre10b,AlvCarFre10,AlvPumVig17,AlvSou12,AlvSou14,AlvVia02,Ara21,BahRuz17,ColTre88,DemLiv08,Fre05,FreTod08,Gal17a,Gal18,GalLuc15,GalSor22,Kel82,Tal20,Tal22,Thu01,Tsu96,Tsu96,Vas07}, in some cases it is even possible to prove that the physical measure depends \emph{Lipschitz} or \emph{differentiably} on the perturbation, sometimes refereed to as \emph{linear response} \cites{AntFroGal22,BahGalNis18,BahRuzSau20,BahSau15, BalBenSch15, BalTod16,Bal14,Gal21,GalGiu17,GalPol16,GalSed20,GalSor22,Kor16,Rue09}.

In this paper we consider a large class of one-dimensional maps which were introduced in \cite{DiaHolLuz06} and to which none of the existing results in the literature apply. These maps may contain an arbitrary (finite) number of discontinuities, critical points, and/or singularities with unbounded derivatives, and include as special cases many well-studied families of maps such as quadratic maps, more general smooth unimodal and multimodal maps, Rovella maps, for which statistical stability has already been established in the references cited above. However, the presence of singularities with unbounded derivative and especially  the \emph{co-existence} of singularities and of critical points, gives rise to significant technical challenges. Some techniques for handling these cases were introduced  in \cite{DiaHolLuz06} where it was proved that under  some conditions to be defined below, there always exist some  physical measures. Under some slightly stronger conditions, which imply that this physical measure is unique, we will prove  that  such maps are statistically stable.

\subsection{Maps with critical points  and singularities}
We now give the precise definition of the class of maps we consider. We consider a family of interval maps  as follows. 
Let~\( I \) be a compact interval and $f: I \to I$ a $C^2$ local
diffeomorphism outside a finite set $\mathcal{C}
\subset\mathrm{int}(J)$, of \emph{non-degenerate critical and
singular} points. These are points at which $f$ may be discontinuous
or the derivative of $f$ may vanish or be infinite. In order to
treat all possibilities in a formally unified way we consider
\newcommand{\quand}{\quad\text{ and } \quad }
\begin{equation}\label{eq:cpm}
f(c^{+}):=\lim_{x\to c^{-}}f(x)
\quand
f(c^{-}):=\lim_{x\to c^{+}}f(x)
\end{equation}
 as distinct
\emph{critical values}, thus implicitly thinking of $c^+$ and $c^-$
as distinct critical points. When referring to a neighbourhood of a
critical point, we shall always be referring to the appropriate
one-sided neighbourhood of that point. For simplicity, we also treat the ``regular'' critical points at which \( f\) is smooth  as two distinct critical points. 
We say that the set of critical points \( \mathcal C \)  is  \emph{non-degenerate}  if  
there is a constant $C>0$ such 
for each \( c \in \mathcal C \) there exists  $\ell_c\in (0,\infty) $ such that for
every $x$ in a neighbourhood of $c$ we have
\begin{equation}\label{defnCmap1}
\frac1C\le\frac{|f(x)-f(c)|}{  |x-c|^{\ell_c}}\le C, 
\quad 
\frac1C\le\frac{|f'(x)| }{ |x-c|^{\ell_c-1}}\le C 
\quad \text{ and } \quad 
\frac1C\le\frac{   |f''(x)|}{ |x-c|^{\ell_c-2}}\le C.
\end{equation}
%
We let  $$\mathcal{C}^{c}=\{c:\ell_c\geq 1\}\quand\mathcal{C}^{s}=\{c:0<\ell_c<1\}$$ denote the set of critical and
singular points, respectively. Notice that two ``distinct'' points in
$\mathcal{C}^{c}$ and $\mathcal{C}^{s}$ may actually correspond
to the same point in $I$. 
When there is no possibility of confusion we will often use the term
``critical point'' to refer to a point of \( \mathcal C \) without
necessarily specifying if that point is really a critical point in the
traditional sense with \( \ell_{c}>1 \) or whether it is a singular
point with \( \ell_{c}\in (0,1) \), or a ``neutral'' point with \(
\ell_{c}=1 \). For
ease of exposition we assume the derivative of $f$ at the points of
discontinuity is either unbounded or zero. To accommodate bounded
derivatives, we would have to slightly modify our
argument to include the case of a return to a region where there is
a bounded discontinuity, the derivative growth and distortion
estimates would not be affected by such bounded discontinuities.
For any \( x\in I \) and
for small \( \delta>0 \), let
\begin{equation}\label{eq:Delta}
\DD(x)=\min_{c\in\mathcal{C}}|x-c|
\quad \text{ and } \quad \Delta=\{x: \mathfrak D(x) \leq \delta\}.
\end{equation}
denote respectively the distance of \( x \) from the nearest critical point, and a $\delta$-neighbourhood
of $\mathcal{C}$. 
To define the family  \( \mathcal F \)  we suppose first of all that every map in \( \mathcal F \) satisfies the non-degeneracy conditions given above.  For clarity we will sometimes use a subscript, such as writing \( \Delta_{f}\) or \( \mathcal C_{f}\),  to make precise that the objects under consideration depend on the specific map \( f \in \mathcal F \). 
We assume that  
\begin{equation}\label{card}
 N_c := \# \mathcal C_f \text{ is constant in \(  \mathcal F \) }
 \end{equation} 
 and that there exist  uniform constants \( \hat\ell, \ell>0\) such that  \( \hat \ell > \ell_c >\ell\)   and  the constant \( C \)    in~\eqref{defnCmap1} can be chosen the same for all \( f\in \mathcal F \) and all critical points \( c\in \mathcal C_{f}\). 
If \( c\in \mathcal C_{f}\), then it is implicit that its images  \( c_{k}\) are given by iterates of the map \( f \). We moreover suppose that there exist constants  
\begin{equation}\label{eq:alpha}
 \lambda, \Lambda, \kappa,  \alpha, \delta> 0, \quad \text{ with } \quad \alpha < \lambda/5\hat\ell,
 \end{equation}
  such that for every \( f\in \mathcal F \) the following conditions hold.

\begin{itemize}
\item[\bf(H1)] \textbf{Expansion outside \(\mathbf \Delta_{f} \):}
\emph{For all \( x\in I \) and \( n\geq 1 \) such that
 $x,\dots, f^{n-1}(x)\not\in\Delta_{f}$, we have
 $$
|(f^n)'(x)|\geq \kappa \delta e^{\lambda n}.
$$
If, in addition, $x_{0}\in f(\Delta_{f})$ and/or  $f^n(x)\in\Delta_{f}$}, then  we have
 $$
|(f^n)'(x)|\geq \kappa e^{\lambda n}.
$$
\item[\bf(H2)] \textbf{Recurrence and exponential growth:} \label{H2} \emph{For all $
k\geq 1$ and all ${c\in\mathcal{C}^{c}_{f}}$,  we have
$$\DD(c_k)\geq\delta e^{-\alpha k}\quad\textrm{and}\quad
|(f^k)'(c_1)|\geq e^{\Lambda k}.$$}
\item[\bf(H3)] \textbf{Density of preimages:}
\emph{There exists  $c^*$ in $\mathcal{C}_{f}$ whose preimages are dense
in the interval $ I$ and do not intersect the set  $\mathcal{C}_{f}$.}
\end{itemize}

It  was  proved in \cite[Theorem 1]{DiaHolLuz06} that if (H1)-(H3) hold for constants $ \alpha, \delta$  sufficiently small with respect to   $ \lambda, \kappa$,
 then there exists a full branch induced Gibbs-Markov map with integrable return times. We will therefore assume that this holds and this completes our definition of the class \(  \mathcal F \). In particular, \emph{each \( f \in \mathcal F \) admits an ergodic invariant probability measure~\( \mu_{f}\) which is absolutely continuous with respect to Lebesgue measure \( m \).} To simplify the notation we will sometimes   use \( |A|\) to denote the Lebesgue measure of a Borel  set~\( A\).

 We note that  condition (H3) is more restrictive than that used in \cite{DiaHolLuz06}, where the preimages of $c^*$ are only assumed to be dense in some subset of~\( I \). Our condition implies in particular uniqueness of the probability measure \( \mu_{f}\) without which it would be much harder to even formulate the property of statistical stability, without adding much to the result. We also note that conditions (H1)-(H3) are non-trivial conditions which generally require non-trivial arguments to verify. There are however many  one-parameter families of maps in which they hold for  large (positive measure) sets of parameters, including 
\begin{itemize}
\item
\emph{families of  smooth   maps with critical points} \cite{BenCar91,BruLuzStr03};
\item
\emph{Lorenz-like families of maps with critical points}~\cite{AlvSou12, Rov93};
\item
\emph{Lorenz-like families of maps with
 critical points and singularities~\cite{LuzTuc99}}.
\end{itemize}

 \subsection{Statement of results}
 We are now ready to state our results.  We assume that 
 
 \medskip
 \begin{quote}
 \center{\em
 \( \mathcal F \) is a  family of maps satisfying conditions (H1)-(H3)
 }
 \end{quote}
 
 \medskip
 \noindent for sufficiently small constants \( \alpha, \delta\) as mentioned above. 
 First of all we will show that our strengthening of condition (H3) above implies a stronger version of the results of \cite{DiaHolLuz06} in that it implies uniqueness of the physical measure. 
 
 \begin{maintheorem}\label{thm:unique}
Every \( f\in \mathcal F \) has a unique ergodic invariant probability measure \( \mu_{f}\) which is absolutely continuous with respect to Lebesgue measure. 
 \end{maintheorem}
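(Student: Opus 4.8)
The plan is to leverage the structure already granted by the definition of $\mathcal F$: for each $f\in\mathcal F$ we are given a full-branch induced Gibbs--Markov map with integrable return times, and hence an absolutely continuous invariant probability measure $\mu_f$. So the content of Theorem~\ref{thm:unique} is \emph{uniqueness} and \emph{ergodicity}, and the key new hypothesis to exploit is the strengthened density condition (H3): the backward orbit of $c^*$ is dense in all of $I$. I would first recall that any absolutely continuous invariant probability measure can be decomposed into ergodic components, each itself absolutely continuous, and each supported (up to measure zero) on a forward-invariant set; so it suffices to show that any two ergodic a.c.i.p.\ measures coincide, and equivalently that there is a unique ergodic a.c.i.p.

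The main step is a standard but careful \emph{support/covering} argument. Let $\nu$ be an ergodic a.c.i.p.\ for $f$. Its density $d\nu/dm$ is positive on a set of positive Lebesgue measure, so there is an open interval $J_0$ and a constant $\rho>0$ with $d\nu/dm\ge\rho$ a.e.\ on $J_0$. I would then use the expansion hypothesis (H1) together with the bounded-distortion machinery underlying the induced Gibbs--Markov structure to show that forward iterates of $J_0$ (cut at critical points) grow and eventually cover a definite-size interval, and in fact that $\bigcup_{n\ge 0} f^n(J_0)$ contains an interval $J_1$ up to which $\nu$ is mod-$m$ supported; the essential support of an a.c.i.p.\ is (up to null sets) a finite union of intervals, forward invariant. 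Now invoke (H3): the preimages of $c^*$ are dense in $I$ and avoid $\mathcal C_f$. Combined with the topological transitivity that (H1)+(H3) force on the relevant part of the dynamics, this prevents the essential support from being a proper nontrivial invariant subset — any such invariant union of intervals would have to contain $c^*$ or one of its dense preimage points in its closure yet be relatively open, forcing it to be all of $I$ (or the whole recurrent region). Hence the essential support of every ergodic a.c.i.p.\ is the same set $\Omega$, and since two distinct ergodic invariant measures must have disjoint (mod the measure) ergodic supports while absolute continuity forces their supports to have positive Lebesgue measure inside $\Omega$, they cannot both be ergodic unless they coincide. This yields uniqueness; ergodicity of $\mu_f$ then follows because its ergodic decomposition consists of a single component.

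The main obstacle I anticipate is making the covering argument rigorous in the presence of the critical points and, especially, the singularities: iterating an interval through $\Delta_f$ can either contract it (near critical points with $\ell_c>1$) or distort it badly (near singular points with $\ell_c<1$), so one cannot naively say ``images of intervals grow.'' The right way around this is not to track a single interval but to use the induced full-branch Gibbs--Markov map $F=f^{\tau}$ supplied by the hypotheses: $F$ has full branches onto a fixed interval $\Delta_0$ (or the relevant domain) with uniformly bounded distortion, so its unique a.c.i.p.\ is immediate by Folklore-theorem arguments, and every $f$-a.c.i.p.\ pushes down from / pulls back to an $F$-invariant a.c.\ measure. Uniqueness for $F$ then transfers to uniqueness for $f$ via the standard saturation construction $\mu_f=\sum_{n\ge0} f^n_*(\mu_F|\{\tau>n\})$ (finite total mass by integrability of $\tau$), provided one checks that the density of $c^*$-preimages guarantees that the induced system sees all of $I$, i.e.\ that there is no second a.c.\ component living outside the base of the induced map. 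I would carry out the argument in that order: (1) uniqueness and ergodicity for the induced Gibbs--Markov map $F$ via bounded distortion; (2) the saturation/tower correspondence between $F$-a.c.i.p.s and $f$-a.c.i.p.s; (3) use (H3) to rule out $f$-a.c.i.p.s not arising from the tower, concluding uniqueness and ergodicity of $\mu_f$.
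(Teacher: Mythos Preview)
Your second approach --- uniqueness for the induced map $F$, lifting via the tower, then using (H3) to rule out a.c.i.p.s living outside the tower --- is essentially the paper's strategy, and your identification of step (3) as the crux is correct. The paper's execution of that step is, however, more concrete than your outline and worth noting: rather than arguing abstractly about essential supports or invariant unions of intervals, the paper observes that the entire inducing construction (escape partition, binding periods, returns to $\Delta^*$) can be carried out not just on $\Delta^*$ but on \emph{all of $I$}. This works precisely because the strengthened (H3) makes Lemma~\ref{retpart} (the ``return after escape'' lemma) applicable to \emph{any} interval of length $\ge\delta$ in $I$, not only those arising from iterating $\Delta^*$. One thus obtains a global full-branch induced map $\widehat F:I\to\Delta^*$, and hence Lebesgue-a.e.\ $x\in I$ has a neighbourhood $V$ and an $N$ with $f^N(V)\subset\Delta^*$. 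Any a.c.i.p.\ $\nu$ must then charge some such $V$, and invariance gives $\nu(\Delta^*)>0$; since $\mu_f$ is equivalent to Lebesgue on $\Delta^*$ (full-branch Gibbs--Markov with bounded distortion), a standard argument forces $\nu=\mu_f$.

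Your first approach via essential supports would also work in principle, but as you yourself anticipate, the interval-growing argument is delicate in the presence of singularities with unbounded derivative. The paper sidesteps this entirely by reusing the inducing machinery globally, which is both shorter and requires no new estimates beyond those already established for the construction on $\Delta^*$.
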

 
 We give the proof of the uniqueness part of Theorem \ref{thm:unique}  in Section \ref{sec:unique}, after recalling the main features of the argument in 
 \cite{DiaHolLuz06}.  
   The main focus of our paper here  is the question of whether the measures \( \mu_{f}\) depend continuously on the maps \( f\in  \mathcal F \). To formulate this  precisely we define a metric on the space \(  \mathcal F \). Notice that we cannot use the standard \( C^r\) metric because the maps are not differentiable or even continuous and, importantly, \emph{we are not assuming that the discontinuity points are the same for all maps in the family}. Thus we want to allow maps to be nearby as long as their points of discontinuities are close. To define this metric we first of all let 
\begin{equation}\label{eq:critorder}
 c_{1}^{f}< c_{2}^{f}< \cdots < c_{s}^{f}
\quad \text{  and } \quad c_{1}^{g}< c_{2}^{g}< \cdots < c_{s}^{g}
\end{equation}
be the critical/singular points for \( f \) and \( g \) respectively, and for each \( i=1,..,s\), let \( \ell_{i}^{f}, \ell_{i}^{g}\) denote the order of \( c_{i}^{f}, c_{i}^{g}\) respectively, as in \eqref{defnCmap1}. For any (small) \( \eta>0\), let 
\( \mathcal N_{2\eta, i}^{f,g} 
\) denote the intersection of the (two-sided) neighbourhoods  of radius \( 2 \eta\) of the critical points \( c_{i}^{f}\) and \(  c_{i}^{g}\),  and let 
\[
I_{\eta}:= I \setminus \bigcup_{i=1}^{s}\mathcal N_{2\eta, i}^{f,g}.
\]
Notice that the first and second derivatives of both \( f\) and \( g \) are bounded away from~0 and~$\infty$ in \( I_{\eta}\). We can now define a natural metric on \( 
\mathcal F\). 
 \begin{defn}
 For \( f,g \in \mathcal F \) we set 
\[
d(f,g)= \inf_{\eta >0}\left
\{\sup_{i}|c_{i}^{f}-c_{i}^{g}|< \eta, \;  \sup_{i}|\ell_{i}^{f}-\ell_{i}^{g}|< \eta, 
\; d_{2}(f|_{I_{\eta}}, g|_{I_{\eta}}) < \eta
\right\},
\]
where   \(d_{2}\) denotes the standard \( C^{2}\) distance.
\end{defn}
It is straightforward to verify that this defines a metric on \( \mathcal F \). 
Then, following \cite{AlvVia02} we recall the following definition. 

\begin{defn}\label{def:statstab}
The family \( \mathcal F\) is \emph{strongly statistically stable} if, for every \( f \in \mathcal F\),  the map 
\[
\mathcal F \ni f \longmapsto \frac{d\mu_{f}}{dm} 
\] 
is continuous with respect to the metric \( d \) in \( \mathcal F \) and the \( L^{1}\) norm in the space of densities. 
\end{defn}
Our main result is the following.

\begin{maintheorem}\label{thm:statstab}  
\( \mathcal F\) is  strongly statistically stable.
\end{maintheorem}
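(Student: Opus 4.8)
The plan is to establish strong statistical stability by exploiting the induced full-branch Gibbs--Markov structure that underlies the construction in \cite{DiaHolLuz06}. Fix \( f \in \mathcal F \) and a sequence \( f_{n} \to f \) in the metric \( d \). For each map \( g \in \mathcal F \) there is an induced map \( F_{g} = g^{R_{g}} \) on a reference interval \( \Delta_{0}^{g} \) (one can in fact work with a small neighbourhood of the common point \( c^{*} \), or more precisely a nested sequence of such domains), together with a partition \( \{ \Delta_{0}^{g}(j) \} \) into branches on which \( F_{g} \) is a uniformly expanding \( C^{2} \) diffeomorphism onto \( \Delta_{0}^{g} \) with uniformly bounded distortion, and integrable return time \( R_{g} \). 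The physical measure \( \mu_{g} \) is obtained by pushing forward the absolutely continuous \( F_{g} \)-invariant measure \( \nu_{g} \) under the iterates of \( g \) and normalising, i.e. \( \mu_{g} = \sum_{j} \sum_{k=0}^{R_{g}(j)-1} (g^{k})_{*}(\nu_{g}|_{\Delta_{0}^{g}(j)}) \) up to the normalising constant \( \int R_{g}\, d\nu_{g} \). So the convergence \( d\mu_{f_{n}}/dm \to d\mu_{f}/dm \) in \( L^{1} \) will follow from three ingredients: (i) uniform control of the tails \( \nu_{g}(R_{g} > N) \) and of the densities \( d\nu_{g}/dm \) (uniform bounds and uniform integrability, in a bounded-variation or Hölder sense on each branch); (ii) convergence of the induced data of \( f_{n} \) to that of \( f \) — branch partitions, return times, and the restricted maps \( F_{f_{n}}|_{\text{branch}} \to F_{f}|_{\text{branch}} \) — on all but a set of small \( \nu \)-measure; and (iii) a statistical-stability statement for the induced Gibbs--Markov maps themselves, namely \( d\nu_{f_{n}}/dm \to d\nu_{f}/dm \) in \( L^{1} \), which then transfers to the \( \mu \)'s by a Fatou/dominated-convergence argument using the uniform tail estimates from (i) to truncate the sum over \( k \).

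Concretely, I would proceed as follows. First I would show that the hyperbolic-time / bounded-recurrence machinery of \cite{DiaHolLuz06} that produces the induced map is \emph{uniform} over \( \mathcal F \): the constants \( \lambda, \Lambda, \kappa, \alpha, \delta \) are common by hypothesis, the non-degeneracy constant \( C \) and the order bounds \( \ell < \ell_{c} < \hat\ell \) are common, so the derivative-growth, distortion and recurrence estimates that go into the construction hold with uniform constants; this gives uniform exponential tail bounds \( |\{R_{g} > N\}| \le C_{0} e^{-c_{0} N} \) and a uniform bound on the distortion of all branches of all \( F_{g} \), whence a uniform \( L^{\infty} \) bound and uniform bounded variation (away from \( \mathcal{C} \)) for \( d\nu_{g}/dm \). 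Second, I would prove that \( d(f_{n}, f) \to 0 \) forces the combinatorics to converge: the critical points, their orders, and the \( C^{2} \) data converge on \( I_{\eta} \), and the preimages of \( c^{*} \) (which are the endpoints of the branch partition, by (H3)) move continuously; hence for each fixed finite collection of branches, \( R_{f_{n}} = R_{f} \) and \( F_{f_{n}}|_{\Delta_{0}(j)} \to F_{f}|_{\Delta_{0}(j)} \) in \( C^{2} \), with the exceptional set (branches whose orbit enters \( \Delta_{f} \) deeply, or with very large return time) having \( \nu \)-measure \( \to 0 \) uniformly. Third, with these two facts in hand, I would invoke (or reprove in our setting) the statistical stability of uniformly expanding piecewise \( C^{2} \) Gibbs--Markov maps — a transfer-operator argument: the densities \( h_{g} = d\nu_{g}/dm \) are fixed points of \( \mathcal{L}_{g} \), they lie in a common compact (in \( L^{1} \)) set by the uniform BV bound, any \( L^{1} \) limit point \( h_{\infty} \) of \( h_{f_{n}} \) is a fixed point of \( \mathcal{L}_{f} \) (using branchwise convergence of \( \mathcal{L}_{f_{n}} \to \mathcal{L}_{f} \) on test functions plus the tail control), and by uniqueness (Theorem~\ref{thm:unique} applied to the induced system) \( h_{\infty} = h_{f} \); hence \( h_{f_{n}} \to h_{f} \) in \( L^{1} \). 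Finally I would push this forward: write \( d\mu_{g}/dm \) as the normalised sum over \( k \) of \( (g^{k})_{*}(\nu_{g}|_{\{R_{g} > k\}}) \)-densities, truncate the sum at a large \( N \) using the uniform tails, and on the truncated (finite) sum use the \( C^{2} \)-convergence of the relevant branches of \( g^{k} \) together with \( h_{f_{n}} \to h_{f} \) and bounded distortion to pass to the limit, then let \( N \to \infty \).

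The main obstacle, which deserves most of the work, is step three combined with the interface between the \emph{non-uniformly} hyperbolic map \( g \) and its \emph{uniformly} hyperbolic induced version \( F_{g} \): one must show that the induced structures of \( f_{n} \) and \( f \) are genuinely comparable despite the fact that the critical points — and hence the sets \( \Delta_{f_{n}} \) near which orbits are stretched and where the combinatorics is most delicate — are \emph{different} for each \( n \). In particular one has to control what happens to orbit segments of \( f_{n} \) that pass very close to a critical point of \( f_{n} \) that is slightly displaced from the corresponding critical point of \( f \): the return-time function and the branch partition can change discontinuously there, and the whole point of the new metric \( d \) is precisely to make this controllable, by allowing the critical points and their orders to move but keeping the maps \( C^{2} \)-close away from shrinking neighbourhoods of them. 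Quantifying ``the \( \nu \)-measure of the branches affected by this displacement is small, uniformly in \( n \)'' — i.e. that only a small-measure, high-return-time set of branches is sensitive to the perturbation of \( \mathcal{C} \) — is the technical heart of the argument, and it is here that the uniform recurrence bound (H2) and the uniform expansion (H1) are used to show that deep returns near critical points are exponentially rare in a way that does not degenerate along the sequence \( f_{n} \).
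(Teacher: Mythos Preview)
Your proposal is correct and identifies the same two technical inputs the paper isolates: (a) uniformity over a neighbourhood in \( \mathcal F \) of the induced-map constants (expansion rate, distortion, and the exponential tail of the return time), and (b) convergence in measure of the return-time level sets \( \{R_{f_n}=j\} \to \{R_f=j\} \) for each fixed \( j \). The difference is packaging: the paper does not carry out your steps (iii)--(iv) directly but instead observes that (a) and (b) are precisely the hypotheses (U1)--(U3) of \cite[Theorem~A]{AlvVia02}, whose conclusion is the desired \( L^1 \)-continuity of the density, and so quotes that result as a black box. Your transfer-operator compactness argument on the induced Gibbs--Markov system followed by a truncated push-forward is exactly how that theorem is proved, so you are in effect reproducing \cite{AlvVia02} inside the present paper rather than citing it; this buys self-containment at the cost of length, but is not a genuinely different route.

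One point you should tighten: you assume one can work with ``a small neighbourhood of the common point \( c^* \)'', but (H3) does not force the critical point with dense preimages to depend continuously on the map --- for \( g \) near \( f \), the point \( c^*_g \) may correspond to a \emph{different} index in the ordering \eqref{eq:critorder} and hence be far from \( c^*_f \). The paper handles this by noting that the return mechanism (Lemma~\ref{retpart}) is open in the metric \( d \), so one may build the induced map for \( g \) over the neighbourhood of whichever critical point of \( g \) lies close to \( c^*_f \), whether or not it is \( c^*_g \); you should make the same adjustment.
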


As a consequence of Theorem~\ref{thm:statstab}, we obtain the statistical stability of the family of Lorenz-like maps with
 critical points and singularities corresponding to the (positive Lebesgue measure set of) parameters obtained   in~\cite{LuzTuc99}.

\begin{remark}

 It should be noted that this work was motivated by the impossibility of applying some existing results in the literature, such as \cite{Alv04,Fre05}, on the statistical stability of families of transformations with non-uniform expansion, to the families of applications considered in \cite{LuzTuc99}. Applications of \cite{Alv04,Fre05} were particularly successful for maps with criticalities, such as quadratic maps or Rovella maps, but made important use of the fact that the derivative is bounded (in particular to obtain estimates from below for the binding periods, depending on the depth of the return). This strategy cannot be carried out for the maps in \cite{LuzTuc99} and for the class of maps defined above due to the presence of  unbounded derivatives. 
 We will still apply the general results of \cite{AlvVia02} but this we will require some very subtle and non-trivial estimates in order to verify the required conditions. 
\end{remark}

\begin{remark}
We mention also that another  one-parameter family  of Lorenz-like maps with critical points and discontinuities, very similar to that considered in \cite{LuzTuc99},  is studied in \cite{LuzVia00}, where it is proved that there is a positive Lebesgue measure set of parameters for which conditions (H1)-(H3) can be shown to hold,  \emph{except} for the first part of (H2), i.e. the slow recurrence condition \( \DD(c_k)\geq\delta e^{-\alpha k} \). It turns out that this is crucial for the techniques we use here and indeed for all available techniques, including for the construction of the induced maps in \cite{DiaHolLuz06}. For these maps it is therefore still an open question whether they even admit an absolutely continuous invariant probability measure. 
\end{remark}

In Section \ref{sec:strategy} we explain the overall strategy of our argument and reduce the proof of Theorem \ref{thm:statstab} to two fundamental Propositions \ref{prop:main2} and \ref{prop:main}. In Section \ref{escape_part} we recall the notation and the main properties of the construction of the induced Gibbs-Markov maps constructed in \cite{DiaHolLuz06}. Then in Section \ref{sec:unique} we prove Theorem \ref{thm:unique} on the uniqueness of the physical measure, in Section \ref{sec:small} we prove Proposition \ref{prop:main2} and in Sections \ref{sec:small} and \ref{sec_bind} we prove Proposition \ref{prop:main}, thus completing the proof of Theorem \ref{thm:statstab}.

\section{Strategy of the proof} \label{sec:strategy}
We recall first of all the key technical result of \cite[Theorem 1]{DiaHolLuz06} is that there exists a (one-sided) neighbourhood  $\Delta^{*}\subset I$
of the critical point \( c^{*} \),  a countable ($m$ mod~0)
partition  $\mathcal{P}$ of \( \Delta^{*} \) into
       subintervals, a function \( T: \Delta^{*} \to \mathbb N \)
       defined almost everywhere and constant on elements of
       the partition \( \mathcal P \), and
       constants \( C,  D, \gamma >0 \) and \( \sigma> 1\),  such that
           \begin{equation}\label{eq:Cgammatail}
    |\{T>n\}|< C  e^{-\gamma n},
    \end{equation}
    and        for all \( \omega\in\mathcal P \) and \( T=T(\omega) \),
 the map \( f^{T}:\omega \to \Delta^{*} \) is a \( C^{2} \)
 diffeomorphism and for all \( x,y\in\omega \)
    \begin{equation}\label{ref:unif1}
    \left| \frac{(f^{T})'(x)}{(f^{T})'(y)} -1 \right|
    \le  \mathcal D |f^{T}(x)-f^{T}(y)|
\quad \text{ and } \quad 
|f^{T}(x)-f^{T}(y)|\ge \sigma |x-y|. 
\end{equation}
We will refer to the construction carried out in \cite{DiaHolLuz06}  to prove the following two key results.

\begin{prop}\label{prop:main2}
For every \( f\in \mathcal F \) and every  $N\geq 1$ and  $\e>0$, there is
$\eta=\eta(\e,N)>0$ such that for every \( g\in \mathcal F \) and  $j=1,\dots,N$
 \begin{equation*}
d(f,g)<\eta\implies 
|\{T_{f}=j\}\triangle \{T_{g}=j\}|<\e,
 \end{equation*}
where $\triangle$ represents the symmetric difference of two sets.
\end{prop}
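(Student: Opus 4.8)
The plan is to trace through the inductive construction of the partition $\mathcal P$ and the return-time function $T$ carried out in \cite{DiaHolLuz06}, and to show that for a \emph{fixed} finite number of iterates the construction depends continuously on the map in the metric $d$. The key point is that each element $\omega$ of $\mathcal P$ with $T(\omega)=j$ is carved out by a finite sequence of operations — following an orbit of a boundary point of some interval for at most $j$ steps, subdividing at preimages of $c^{*}$ (using (H3)), and cutting at the boundary of the neighbourhood $\Delta_{f}$ of $\mathcal C_{f}$ and at binding/escape thresholds — all of which involve only finitely many points and finitely many ``decisions'' that are governed by strict inequalities (e.g. $\mathfrak D(x_{k})>\delta e^{-\alpha k}$, or an orbit lying in $I\setminus\Delta_{f}$, or the accumulated derivative exceeding a threshold). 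Since $j\le N$ and $N$ is fixed, only finitely many such inequalities are involved for all of $\{T_{f}=j\}$, $j=1,\dots,N$.

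First I would set up notation: fix $f\in\mathcal F$, $N$, and $\e>0$. For each $j\le N$, the set $\{T_{f}=j\}$ is a finite union of intervals $\omega$ (here one uses that for bounded return time the partition is finite, which follows from \eqref{eq:Cgammatail} or directly from the construction), whose endpoints are among: preimages of $c^{*}$ up to order $\le j$, preimages of the endpoints of $\Delta_{f}$ under $f,\dots,f^{j}$, and preimages of finitely many ``algorithmic'' thresholds (binding times, escape times) determined by the orbits of critical values — all under iterates of $f$ restricted to regions where $f$ is a $C^{2}$ diffeomorphism with derivative bounded away from $0$ and $\infty$ (after possibly removing tiny neighbourhoods of $\mathcal C_{f}$, which contributes negligibly to Lebesgue measure and can be absorbed into $\e$).

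Next I would argue continuity of each such endpoint in $g$. Write the endpoints of $\{T_{f}=j\}$ as solutions of finitely many equations of the form $f^{k}(x)=p$ with $0\le k\le j$ and $p$ a point depending continuously on $f$ (an endpoint of $\Delta_{f}$, or $c^{*}_{f}$, or a point on a critical orbit, or a preimage of such). If $d(f,g)$ is small then on $I_{\eta}$ the map $g$ is $C^{2}$-close to $f$, its critical points and exponents are close, and — crucially — all the defining inequalities for $f$ are \emph{strict}, so they persist for $g$: the combinatorial structure of the algorithm (which intervals get subdivided, which get assigned return time $j$, where binding periods start and end) is \emph{identical} for $f$ and $g$ once $\eta$ is small enough, depending on $N$ and on the finitely many ``gaps'' in these strict inequalities for $f$. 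Then by the implicit function theorem (or simply uniform continuity of $f^{k}$ and $g^{k}$ on the relevant compact pieces, using uniform $C^{1}$ bounds), each endpoint of $\{T_{g}=j\}$ is within $\e'$ of the corresponding endpoint of $\{T_{f}=j\}$, with $\e'\to 0$ as $\eta\to 0$. Since each $\{T_{f}=j\}$ has a bounded number $M_{N}$ of endpoints (a bound depending only on $N$ and the uniform constants of $\mathcal F$), we get $|\{T_{f}=j\}\triangle\{T_{g}=j\}|\le 2 M_{N}\e'$, and choosing $\eta$ small enough makes this $<\e$ for all $j=1,\dots,N$ simultaneously.

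The main obstacle I anticipate is \emph{not} the soft continuity argument but the bookkeeping needed to make precise that the construction of \cite{DiaHolLuz06} really is, for bounded return times, a finite algorithm driven by strictly satisfied inequalities — in particular that binding periods associated to returns close to $\mathcal C_{f}$ have \emph{bounded} length when the total return time is bounded (this uses (H1)–(H2), the recurrence bound $\mathfrak D(c_{k})\ge\delta e^{-\alpha k}$ and the exponential growth $|(f^{k})'(c_{1})|\ge e^{\Lambda k}$, so that a binding period contributing to a return time $\le N$ can involve at most $\sim N$ iterates), and that the boundary of $\Delta_{f}$ and the finitely many critical-orbit points that appear all depend continuously on $f$ in the metric $d$ — the latter because $\mathcal C_{g}$ is close to $\mathcal C_{f}$ and $g$ is $C^{2}$-close to $f$ away from $\mathcal C_{f}$, so $g^{k}(c_{g})$ is close to $f^{k}(c_{f})$ for each fixed $k\le N$ (an easy induction, again using that the orbit stays in a region of bounded derivative, which it does by (H2)). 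Once this combinatorial rigidity is established, the measure estimate on the symmetric difference is routine.
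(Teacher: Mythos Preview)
Your approach is essentially the paper's: both argue that for bounded return time the construction of \cite{DiaHolLuz06} is a finite algorithm whose ingredients (binding periods, free periods, escape times, and the return step via Lemma~\ref{retpart}) depend continuously on the map, so the level sets $\{T=j\}$ for $j\le N$ vary continuously in measure. Your write-up is in fact more explicit than the paper's, which dispenses with the endpoint bookkeeping and simply asserts continuous dependence.

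There is, however, one genuine point you overlook and which the paper addresses first. You treat $c^{*}_{f}$ as varying continuously with $f$, but condition (H3) only asserts the \emph{existence} of some critical point with dense preimages; it does not pin down which one. For $g$ close to $f$, the critical point $c^{*}_{g}$ supplied by (H3) need not be the one corresponding to $c^{*}_{f}$ in the ordering \eqref{eq:critorder}, so a priori $\Delta^{*}_{g}$ could sit near a different critical point entirely, and then the domains of the two induced maps are not close and your endpoint-matching argument does not get started. The paper's fix is to observe that the conclusions of Lemma~\ref{retpart} are \emph{open} in the metric $d$: if they hold for $f$ with the interval $\Delta^{*}_{f}$, they continue to hold for nearby $g$ with the neighbourhood of the critical point of $g$ that \emph{does} correspond to $c^{*}_{f}$, even if that point is not the one furnished by (H3) for $g$. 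One can therefore build the induced map for $g$ on that nearby domain, and only then does the continuity argument you outline go through. You should insert this step at the outset.
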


\begin{prop}\label{prop:main}
For every \( f\in \mathcal F \), the constants $C, \mathcal D, \sigma, \gamma$ as
above may be chosen uniformly in a neighbourhood of  \( f\). 
\end{prop}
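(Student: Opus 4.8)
The plan is to revisit the construction of the induced Gibbs--Markov map of \cite{DiaHolLuz06} and track, step by step, exactly where each of the constants $C, \mathcal D, \sigma, \gamma$ enters, verifying that at each stage the quantitative input depends only on the uniform data $\lambda, \Lambda, \kappa, \alpha, \delta, \hat\ell, \ell, C$ fixed in \eqref{eq:alpha} and \eqref{defnCmap1}, together with the number of critical points $N_c$. Since these data are \emph{by hypothesis} common to all maps in $\mathcal F$, the conclusion will follow once we argue that every map $g$ sufficiently $d$-close to $f$ still satisfies (H1)--(H3) with the \emph{same} constants (possibly after an arbitrarily small deterioration that can be absorbed). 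This last point is where the metric $d$ is used: if $d(f,g)$ is small then the critical points and their orders are close, and $g$ is $C^2$-close to $f$ away from the criticalities, so the hyperbolicity estimate (H1), the recurrence/growth estimate (H2), and the density-of-preimages condition (H3) persist with constants arbitrarily close to those of $f$ — and since (H1)--(H3) are open conditions on the constants, we may fix slightly worse but still uniform values valid on a whole $d$-neighbourhood of $f$.

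The key steps, in order, are: (i) recall from Section \ref{escape_part} the partition $\mathcal P$ and the return-time function $T$, and isolate the four places where $C, \mathcal D, \sigma, \gamma$ are produced — namely the bounded-distortion estimate \eqref{ref:unif1} (which yields $\mathcal D$), the uniform expansion $|f^T(x)-f^T(y)|\ge\sigma|x-y|$ (which yields $\sigma$), and the tail estimate \eqref{eq:Cgammatail} (which yields $C$ and $\gamma$); (ii) show that the distortion constant $\mathcal D$ is controlled by a bounded-distortion-along-orbits argument whose inputs are the non-degeneracy constant $C$ of \eqref{defnCmap1}, the slow-recurrence rate $\alpha$ and scale $\delta$ of (H2), and the expansion rate $\lambda$ of (H1) — none of which depends on the individual map in a $d$-neighbourhood; (iii) show that $\sigma$ is obtained from (H1) once the return time is large enough and that the minimal return time is itself controlled uniformly, using again that $f(\Delta_f)$-type estimates in (H1) are uniform; (iv) show that the exponential tail constants $C,\gamma$ come from the large-deviation / escape-time argument of \cite{DiaHolLuz06}, whose rate is a function solely of $\lambda, \Lambda, \alpha, \delta, \kappa$ and $N_c$, hence uniform; (v) combine with the openness of (H1)--(H3) established above to conclude that all four constants can be chosen valid on a common neighbourhood of $f$.

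The main obstacle I expect is step (iv) together with the subtler parts of (ii): the construction in \cite{DiaHolLuz06} handling the \emph{co-existence} of critical points and singularities uses binding periods and a delicate interplay between the recurrence bound $\DD(c_k)\ge\delta e^{-\alpha k}$ and the (possibly unbounded) derivative near singular points, so one must check that the binding-period length, the recovery of derivative growth after a return, and the resulting Markov tower estimates are all \emph{quantitatively} a function of the uniform constants only, rather than of, say, the actual value of $|f'|$ near a singularity (which is genuinely unbounded and map-dependent). The point is that in all the relevant estimates these unbounded quantities always appear in combinations — ratios along an orbit, or integrals over a return domain — that are tamed by the non-degeneracy bounds \eqref{defnCmap1} with the uniform constant $C$ and by the uniform orders $\ell < \ell_c < \hat\ell$; making this bookkeeping precise, and in particular re-deriving the binding-period estimates of \cite{DiaHolLuz06} in a manifestly uniform form, is the technical heart of the argument. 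A secondary, more routine obstacle is verifying that (H3) is robust under the metric $d$, i.e. that density of the preimages of $c^*$ persists; this is handled by noting that (H1) forces topological transitivity-type behaviour on a scale controlled by the uniform constants, so the density is not merely qualitative but comes with a uniform ``covering time'' that survives small perturbations.
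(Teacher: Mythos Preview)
Your overall strategy --- trace each of $\sigma, \mathcal D, C, \gamma$ back through the construction of \cite{DiaHolLuz06} and check that every intermediate constant depends only on the uniform data $\lambda, \Lambda, \kappa, \alpha, \delta, \hat\ell, \ell, C, N_c$ --- is exactly what the paper does (Sections~\ref{sec_bind}, \ref{distest}, \ref{ret_time}). However, you have misidentified where the \emph{local} (as opposed to global) uniformity comes from, and this leads to a genuine gap.

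You spend effort arguing that (H1)--(H3) ``persist with constants arbitrarily close to those of $f$'' for $g$ near $f$. This is unnecessary: by the definition of $\mathcal F$, \emph{every} map in $\mathcal F$ already satisfies (H1)--(H3) with the \emph{same} fixed constants --- there is nothing to verify. The actual reason the proposition only claims local uniformity is the constant $\xi$ in Lemma~\ref{retpart}: given an escape interval of length $\ge\delta$, one needs a definite proportion $\xi$ of it to land in $\Delta^*$ within a bounded time $t^*$. These numbers come from the $\varepsilon$-density of the finite set $\{f^{-t}(c^*):t\le t^*\}$, which is a purely topological/combinatorial fact about finitely many iterates of the specific map $f$, and is transferred to nearby $g$ by a straightforward continuity argument --- not by (H1). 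Your claim that ``(H1) forces topological transitivity-type behaviour on a scale controlled by the uniform constants, so the density is not merely qualitative but comes with a uniform covering time'' is not correct: (H1) is an expansion estimate and says nothing about how quickly preimages of $c^*$ become dense; there is no reason $t^*$ or $\xi$ should be uniform over all of $\mathcal F$. The tail constants $C,\gamma$ then inherit this local-only uniformity via $\xi$ (see \eqref{many_esc}), while $\sigma$ and $\mathcal D$ are in fact globally uniform, as the paper notes in the Remark following Proposition~\ref{prop:main}.
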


\begin{remark}
As we shall see, the constants \( \mathcal D\) and \( \sigma\) can actually be chosen uniformly for the whole family \( \mathcal F\). For the other constants we are only able to prove local uniformity, mainly  due to the argument required to define the constant \( \xi \) in Lemma~\ref{retpart} below. 
\end{remark}

\begin{proof}[Proof of Theorem \ref{thm:statstab}]
We show that Propositions \ref{prop:main2} and \ref{prop:main} allow us to apply \cite[Theorem A]{AlvVia02}, whose conclusion is exactly the strong statistical stability required in our Theorem \ref{thm:statstab}. This 
 relies on three  assumptions (U1)-(U3) and we explain below why the conclusions of Propositions \ref{prop:main2} and \ref{prop:main} imply these assumptions. 

Assumption (U1) is precisely Proposition \ref{prop:main2} except for the fact that the \( C^{k}\) distance in (U1) is replaced by our metric defined above. This change is completely inconsequential since the only fact used in the proof of \cite[Theorem A]{AlvVia02} is the fact that  \[|\{T_{f}=j\}\triangle \{T_{g}=j\}|<\e\] for ``nearby'' maps \( f,g \), whatever the metric. 

Assumption (U2) can be formulated in our setting as follows: for all   $\e>0$, there exists  $N\geq 1$ such that for all \( f \in \mathcal F\) 
\begin{equation}\label{eq:tail}
\sum_{j=N}^\infty  |\{T_f>j\}|<\e.
\end{equation}
The precise formulation in \cite{AlvVia02} is a little more general because the result there allows for maps on higher dimensional manifolds, in which case the uniform summability condition~\eqref{eq:tail} is formulated as 
 $
\|\sum_{j=N}^\infty\mathcal X_{\{T>j\}}\|_q<\e,
 $
where $\mathcal X_{\{T>j\}}$ denotes the characteristic function of the set ${\{T>j\}}$,  \( q \) is the conjugate exponent to \( p:=d/(d-1) \),  and \( d \) is the dimension of the manifold. In our case, this gives \( p=\infty\) and \( q=1\) and therefore the condition  $
\|\sum_{j=N}^\infty\mathcal X_{\{T>j\}}\|_q<\e,
 $ gives \eqref{eq:tail}, which clearly follows from \eqref{eq:Cgammatail}  and the uniformity of the constants \( C, \gamma\). 
 
 Finally, assumption (U3) is a statement about the uniformity of certain constants  $\sigma$,~$K$,~$\beta$ and $ \rho$. The constants \( \sigma \)  and \( K \) are  precisely our constants \( \Lambda^{-1}\) and  \( \mathcal D \), respectively, which Proposition \ref{prop:main} says can be chosen uniformly in \( \mathcal F \). Assumption  (U3), as stated in  \cite{AlvVia02},  involves two additional constants \( \beta, \rho\) which are however only required in the more general setting of maps on  higher dimensional manifolds which may not be full branch. 
\end{proof}
%

\section{The induced map}\label{escape_part}

The key point in the proof of Propositions \ref{prop:main2} and \ref{prop:main}  is to show some continuity and uniformity properties of the construction of the induced map in \cite{DiaHolLuz06} which are not immediately clear. We therefore recall here the notation, the main steps and properties of the construction, for more details, remarks, and proofs we refer the reader to \cite{DiaHolLuz06}.

\subsection{Critical partitions and binding periods}
\label{critpart}
We recall first from \eqref{eq:cpm} that each critical point comes with a \emph{one-sided neigbourhood} and we use the notation \( c^{\pm}\) depending on whether this is a left or right neighbourhood. 
For each $c\in\mathcal{C}$ and for any integer $r\geq 1$ we let
\[
I_{r}(c)= [c+e^{-r},c+e^{-r+1}) \quad \text{and} \quad
I_{-r}(c)=(c-e^{-r+1},c-e^{-r}].
\]
We suppose without loss of
generality that 
\begin{equation} \label{rdelta}
r_{\delta} := \log \delta^{-1} \in\mathbb N.
\end{equation}
For
each $c\in\mathcal{C}$, let
\begin{equation*}
\Delta_{c}=
\begin{cases}
\{c\}\cup \bigcup_{r\geq r_{\delta}+1} I_{r}(c), &\text{if $c=c^{+}$},\\
\{c\}\cup \bigcup_{r\leq -r_{\delta}-1} I_{r}(c), &\textrm{if
$c=c^{-}$;}
\end{cases}
\quad
\hat \Delta_{c}=
\begin{cases}
\{c\}\cup \bigcup_{r\geq r_{\delta}} I_{r}(c), &\text{if $c=c^{+}$},\\
\{c\}\cup \bigcup_{r\leq -r_{\delta}} I_{r}(c), &\textrm{if
$c=c^{-}$}.
\end{cases}
\end{equation*}
Notice that \( \hat\Delta_{c} \) is just \( \Delta_{c} \) union an
extra interval of the form \( I_{\pm r_{\delta}} \) and that the set~\( \Delta \) defined in \eqref{eq:Delta} is just the union of all \( \hat\Delta_{c}\) with \( c\in \mathcal C \). 

We further subdivide each $I_r\subset \Delta$ (and not the additional \(
I_{\pm r_{\delta}}\subset \hat\Delta\setminus\Delta \))
into $r^2$ intervals $I_{r,j}$,
$j\in [1,r^2]$ of equal length. The intervals \(  I_{r,j}\) together with the extreme intervals 
\( I_{\pm r_{\delta}} \) define what we call the \emph{critical
partition}  $\mathcal{I}$ of \( \Delta \). Finally, for each
\( r\geq r_{\delta}+1, \) and \( j\in [1,r^2] \), let
\( \hat I_{r} \)
denote the union of \( I_{r} \) and its two neighbouring intervals. In
particular, if \( I_{r,j}=I_{r_{\delta}+1, (r_{\delta}+1)^{2}} \) is
one of the two extreme intervals of \( \Delta \), then \( \hat I_{r,j} \)
denotes the union of this interval with the adjacent intervals \(
I_{r, j-1} \) and \( I_{r_{\delta}} \) (which has not been subdivided
into subintervals).

Using the partitions defined above, we formalize the notion of a
\emph{binding period} during which points in the critical region \(
\Delta \) \emph{shadow} the orbit of the critical point.
For each \( r\geq r_{\delta}+1 \), \(
I_{r}\in \mathcal I \) belonging to the component of \( \Delta \)
containing a critical point \( c\in \mathcal C \), set
\begin{equation}\label{ss:binding}
p(r)=\begin{cases} 0, &\text{ if } c\in\mathcal{C}_{s},\\
\max\left\{k:  |f^{j+1}(x) - f^{j+1}(c)|\leq \delta e^{-2\alpha j} \
\forall\ \ x\in \hat I_{r}, \ \forall \  j  \leq k\right\},
&\text{ if } c\in\mathcal{C}_{c}.
\end{cases}
\end{equation}

\subsection{Escape times}
\label{ss:escape times}

Let  \( J \) be an arbitrary  interval with \( |J|<
\delta \).
We construct a countable partition
\( \mathcal P=\mathcal P(J) \) of \( J \) into
subintervals, called the \emph{escape partition} of \( J \), and
a stopping time function \( E: J\to \mathbb N \),
constant on elements of \( \mathcal P \).
Each element \( \omega \in \mathcal P \) has some combinatorial
information attached to its orbit up to time \( E(\omega) \) and
satisfies
\[
|f^{E(\omega)}(\omega)|\geq\delta.
 \]
We define the construction inductively as follows. Fix \( n\geq 1 \)
and suppose that a certain set of subintervals of \( J \)
have been defined for which \( E < n \). Let \( \omega \) be a
component of the complement of the set \( \{x\in J: E(x) < n\} \).

\subsubsection*{Inductive assumptions}
We suppose inductively that the following combinatorial information
is also available, the meaning of which will become clear when the
general inductive step of the construction is explained below:
\begin{itemize}
    \item every iterate \( i=1,\ldots, n \) is classified as either a
    \emph{free} iterate or a \emph{bound} iterate for~\( \omega \).
    \item the last free iterate before a bound iterate is called
    either an \emph{essential return} or an \emph{inessential return}.
    \item associated to each essential and inessential return there is a
    positive integer called the \emph{return depth}.
 \end{itemize}
We now consider various cases depending on
the length and position of the interval \({
\omega_{n}=f^{n}(\omega)} \) and on whether \( n \) is a free or bound
iterate for \( \omega \).

\subsubsection*{Escape times}
If  \( n \) is a free time for \( \omega \) and $|\omega_n|\geq\delta$
we say that $\omega$ has \emph{escaped}.  We  let
$\omega\in\mathcal P$ and define \( E(\omega) = n
\).
We call $\omega_n$ an escape interval.

\subsubsection*{Free times}
If \( n \) is a free time for \( \omega \) and $|\omega_{n}|<\delta$
we distinguish three cases:
\begin{enumerate}
\item If $\omega_n\cap\Delta=\emptyset,$
we basically do nothing: we
do not subdivide \( \omega \) further, do not add any combinatorial
information, and define \( n+1 \) to be again a free iterate for \(
\omega \).
\item If $\omega_n\cap\Delta\neq\emptyset$ but $\omega_n$ does not intersect
more than two adjacent $I_{r,j}$'s,
we do not subdivide \( \omega \) further at this moment, but add some
combinatorial information in the sense that we
say that $n$ is an \emph{inessential}
return time with \emph{return depth} $r$ equal to the minimum \( r \)
of the intervals \( I_{r,j} \) which \( \omega_{n} \) intersects.
Moreover we  define all iterates \( j=n+1,\ldots, n+p \) as
\emph{bound iterates} for \( \omega \) (\( \omega \) does not get
subdivided during these iterates, see below), where \( p=p(r) \) is
the binding period associated to the return depth \( r \) as defined
in \eqref{ss:binding}.
\item If $\omega_n\cap\Delta\neq\emptyset$ and
 $\omega_n$ intersects more than three adjacent
 $I_{r,j}$'s we subdivide
 \( \omega \) into subintervals $\omega_{r,j}$ in such a way that
each $\omega_{r,j}$ satisfies
$$I_{r,j}\subset f^n\omega_{r,j}\subset\hat{I}_{r,j}.$$
We say that $\omega_{r,j}$ has an \emph{essential} return at time
$n$, with return depth $r$ and define the corresponding binding period
as in the previous case.
\end{enumerate}

\subsubsection*{Bound times}
If \( n \) is a bound time for \( \omega \) we also basically do
nothing.  According to the construction above, \( n \) belongs to some
binding period \( [\nu+1, \nu+p] \) associated to a previous essential
or inessential return at time \( \nu \). So, if \( n< \nu+p \) we say
that \( n+1 \) is (still) a bound iterate, if \( n=\nu+p \) then \( n+1 \)
is a free iterate.

\subsection{Returns following escape times}
The notion of an escape time is meant
to formalize the idea that the interval in question has reached
large scale, and one intuitive consequence of this is that it should
therefore ``soon'' make a return to the domain of the inducing scheme.  

\begin{lemma}[{\cite[Lemma 1]{DiaHolLuz06}}]
\label{retpart}
 There exists $\delta^{*}, t^{*}, \xi>0$, all depending on~\( \delta \),
such that for
\( \Delta^{*}_f=(c^{*}_f-\delta^{*}, c^{*}_f+\delta^{*}) \) and for any
interval \( \tilde\omega \subset I\) with \( |\tilde\omega|\geq \delta \),
there exists a subinterval \( \tilde\omega^*\subset\tilde\omega \) such that:
\begin{enumerate}
\item $f^{t_0}$ maps $\tilde\omega^*$ diffeomorphically onto
$\Delta^{*}_f$ for some $t_{0}\leq t^{*}$,
\item $|\tilde\omega^*|\geq \xi |\tilde\omega|$,
\item both components of $\tilde\omega\setminus\tilde\omega^*$ are of size
$\geq\delta/3.$
\end{enumerate}
Moreover, the constants, and in particular the constant \( \xi \), can be chosen uniformly in a neighbourhood of \( f\) in \( \mathcal F\). 
\end{lemma}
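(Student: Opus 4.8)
The existence of constants $\delta^{*},t^{*},\xi>0$ satisfying (1)--(3) for a \emph{fixed} $f\in\mathcal F$ is precisely \cite[Lemma~1]{DiaHolLuz06}, and since it is that mechanism which we must render uniform, I would first recall it in the following form. By (H3) the preimages of $c^{*}_{f}$ are dense in $I$ and disjoint from $\mathcal C_{f}$; hence for any such preimage $y$, say with $f^{t_{y}}(y)=c^{*}_{f}$, the finite orbit $y,f(y),\dots,f^{t_{y}-1}(y)$ avoids $\mathcal C_{f}$, so there is an open interval $V\ni y$, of arbitrarily small diameter, on which $f^{t_{y}}$ is a $C^{2}$ diffeomorphism onto an open neighbourhood of $c^{*}_{f}$ and whose orbit segments $f^{j}(V)$, $0\le j<t_{y}$, stay at a definite positive distance from $\mathcal C_{f}$. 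As $\mathcal C_{f}$ is finite, for each small $\rho>0$ the compact set $K_{\rho}:=I\setminus\bigcup_{c\in\mathcal C_{f}}(c-\rho,c+\rho)$ is covered by finitely many such intervals $V_{1},\dots,V_{k}$, with times $t_{1},\dots,t_{k}$, diffeomorphisms $f^{t_{i}}\colon V_{i}\to U_{i}$ onto neighbourhoods $U_{i}$ of $c^{*}_{f}$, and $\operatorname{diam}(V_{i})<\delta/12$; one then sets $t^{*}:=\max_{i}t_{i}$ and chooses $\delta^{*}>0$ so small that $\Delta^{*}_{f}=(c^{*}_{f}-\delta^{*},c^{*}_{f}+\delta^{*})$ is compactly contained in every $U_{i}$. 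Given $\tilde\omega\subset I$ with $|\tilde\omega|\ge\delta$, its middle third $M$ has length $\ge\delta/3$, so for $\rho$ small it contains a point $z$ at distance $\ge\delta/12$ from $\partial M$ and from $\mathcal C_{f}$; then $z\in V_{i}$ for some $i$, whence $V_{i}\subset M\subset\tilde\omega$, and $\tilde\omega^{*}:=(f^{t_{i}}|_{V_{i}})^{-1}(\Delta^{*}_{f})\subset V_{i}$ satisfies (1) with $t_{0}=t_{i}\le t^{*}$, satisfies (3) because $\tilde\omega\setminus\tilde\omega^{*}$ contains the two outer thirds of $\tilde\omega$, and satisfies (2) because $f^{t_{i}}$ expands $V_{i}$ by at most $L:=\max_{i}\sup_{V_{i}}|(f^{t_{i}})'|$, giving $|\tilde\omega^{*}|\ge 2\delta^{*}/L$, so that $\xi:=\delta^{*}/(L\,|I|)$ works since $|\tilde\omega|\le|I|$.

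To prove the uniformity statement, the plan is to show that this \emph{same} finite collection of pull-back branches survives, with quantitatively controlled modifications, for every $g\in\mathcal F$ with $d(f,g)<\eta$ once $\eta$ is small enough. The point that makes this work is that the branches were constructed with orbit segments $f^{j}(V_{i})$, $0\le j<t_{i}$, staying at a definite distance $\rho_{0}>0$ from $\mathcal C_{f}$; hence, provided $2\eta<\rho_{0}$, these segments lie in $I_{\eta}=I\setminus\bigcup_{i}\mathcal N_{2\eta,i}^{f,g}$, where by definition of our metric $d_{2}(f|_{I_{\eta}},g|_{I_{\eta}})<\eta$. A step-by-step estimate of Gronwall type---using only that $f$ has uniformly bounded $C^{2}$ norm and $|f'|$ bounded away from $0$ and $\infty$ on $K_{\rho_{0}/2}$, together with $\#\mathcal C_{g}=\#\mathcal C_{f}$ and $|c_{i}^{g}-c_{i}^{f}|<\eta$---then shows that, for $\eta$ small depending only on $t^{*},\rho_{0}$ and these bounds, the iterate $g^{t_{i}}$ is well defined and is a $C^{2}$ diffeomorphism on all of $V_{i}$ (monotone because $(g^{t_{i}})'$ is $C^{0}$-close on the interval $V_{i}$ to the nowhere-vanishing $(f^{t_{i}})'$), its image $U_{i}^{g}:=g^{t_{i}}(V_{i})$ is an interval whose endpoints lie within $O(\eta)$ of those of $U_{i}$, and $(g^{t_{i}})'$ differs from $(f^{t_{i}})'$ on $V_{i}$ by a factor $1+O(\eta)$. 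Since $\delta^{*}$ was already fixed so that $\Delta^{*}_{f}$ is compactly inside each $U_{i}$, and $|c^{*}_{g}-c^{*}_{f}|<\eta$, for $\eta$ small we obtain $\Delta^{*}_{g}=(c^{*}_{g}-\delta^{*},c^{*}_{g}+\delta^{*})\subset U_{i}^{g}$.

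Granting this, the construction of the first paragraph applies verbatim to $g$ with the \emph{same} $t^{*}$ and $\delta^{*}$: the family $\{V_{i}\}$ covers $K_{\rho}$, which depends only on $\mathcal C_{f}$, so for any $\tilde\omega$ with $|\tilde\omega|\ge\delta$ one picks $z$ in the middle third exactly as before, lying in some $V_{i}$, and sets $\tilde\omega^{*}:=(g^{t_{i}}|_{V_{i}})^{-1}(\Delta^{*}_{g})\subset V_{i}\subset\tilde\omega$; properties (1) and (3) hold as in the unperturbed case, while $f^{t_{i}}$ now expanding $V_{i}$ by at most $2L$ gives $|\tilde\omega^{*}|\ge\delta^{*}/L\ge\tfrac12\,\xi\,|\tilde\omega|$, so replacing $\xi$ by $\xi/2$ produces a single constant valid simultaneously for $f$ and for all $g$ in the $\eta$-neighbourhood of $f$.

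I expect the only genuinely delicate step to be the propagation of the $C^{2}$-closeness of $g$ to $f$ along the orbit segments $g^{j}(V_{i})$: the metric $d$ gives \emph{no} control inside the critical windows $\mathcal N_{2\eta,i}^{f,g}$, so one must know a priori that these segments avoid those windows---which is exactly what forces one to build the branches at the outset with orbit segments a definite distance $\rho_{0}$ from $\mathcal C_{f}$, and it is the \emph{finiteness} of $t^{*}$ that lets the Gronwall-type bound over $t^{*}$ compositions be closed with $\eta$ uniformly small (this is also why Proposition~\ref{prop:main} only asserts \emph{local} uniformity of this constant). A second, more routine, point requiring care is the bookkeeping of the auxiliary scales---first the mesh $\delta/12$, chosen in terms of $\delta$ and $N_{c}$, and only afterwards $\delta^{*}$, chosen small relative to the $U_{i}$---so that $\tilde\omega^{*}$ lands inside the middle third of $\tilde\omega$ and $\Delta^{*}_{g}$ inside the perturbed image $U_{i}^{g}$; since the perturbation displaces every relevant object by only $O(\eta)$, these inclusions persist.
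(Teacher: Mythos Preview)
Your proof is correct and follows essentially the same approach as the paper's: use (H3) to obtain finitely many preimages of \(c^{*}_{f}\) that are \(\varepsilon\)-dense in \(I\) and uniformly bounded away from \(\mathcal C_{f}\), pull back \(\Delta^{*}_{f}\) through the one landing near the centre of \(\tilde\omega\), and then invoke continuity over the finitely many iterates involved to obtain local uniformity of the constants. The paper's own proof is considerably terser---it dispatches the uniformity in a single sentence ``By continuity we can choose \(\delta^{*},t^{*},\xi>0\) constant in a neighbourhood of \(f\)''---whereas you spell out the mechanism (orbit segments of the finitely many branches stay in \(I_{\eta}\), where \(d_{2}\) control is available, and a Gronwall-type propagation over at most \(t^{*}\) steps keeps \(g^{t_{i}}|_{V_{i}}\) a diffeomorphism with comparable derivative). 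One small caveat: you assume \(|c^{*}_{g}-c^{*}_{f}|<\eta\), but the metric \(d\) only guarantees that the critical point of \(g\) \emph{corresponding to} \(c^{*}_{f}\) in the ordering~\eqref{eq:critorder} is close to \(c^{*}_{f}\), and this need not be the point \(c^{*}_{g}\) singled out by (H3) for \(g\); the paper addresses exactly this point later, in the proof of Proposition~\ref{prop:main2}, by building the induced map for \(g\) on the neighbourhood of that corresponding critical point rather than on \(\Delta^{*}_{g}\), so your reading is the intended one.
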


The uniformity of the constants is not mentioned explicitly in  \cite[Lemma 1]{DiaHolLuz06} but follows immediately from the proof which we reproduce here for completeness and to highlight this property.

\begin{proof}
By assumption the preimages of $c^*$ are dense in
$I$ and do not contain any other critical point. Therefore for
any \( \varepsilon>0 \) there exists a \( t^{*} \) such
that the set of preimages
\(
\{f^{-t}(c^*): t\leq t^{*}\}
\) of the critical point \( c^{*} \) is
\emph{i)} \( \varepsilon \) dense in \( I \), and
\emph{ii)}  uniformly bounded away from
$\mathcal{C}$.
Using the \( \varepsilon  \)-density and taking \( \varepsilon \)
small enough (depending on \( \delta \) but not on \( \tilde\omega \))
we can guarantee that one of these preimages belongs to \(
\tilde\omega \) and in fact we can ensure that it lies arbitrarily close to
the center of~\( \tilde\omega \). Then, using that fact that these preimages
are uniformly bounded away from \( \mathcal C \) and taking \(
\delta^{*} \) sufficiently small we can  guarantee that a
component of
\( f^{-t_{0}}(\Delta^{*}) \) for some  \(0\leq t_{0}\leq t^{*}\)
is contained in the central third of \(\tilde \omega \). Since everything
depends only on a fixed and finite number of intervals and iterations
it follows that the proportion \( \xi \) of this preimage in \( \tilde\omega \) is
uniformly bounded below. By continuity we can choose $\delta^{*}, t^{*}, \xi>0$ constant in a neighbourhood of \( f\) in \( \mathcal F \).  
\end{proof}

\begin{remark}
The statement of Lemma \ref{retpart}  is slightly stronger than that of \cite[Lemma 1]{DiaHolLuz06} because we take advantage of our stronger assumption in (H3) that there exists a critical point whose preimages are dense in the entire interval \( I \). This allows us  to obtain the conclusions in the lemma for any interval \( \tilde\omega \subset I \) and is indeed the only place  where this assumption is required. 
\end{remark}

We have given the complete algorithm for the construction of the
escape partition \( \mathcal P \) of an arbitrary interval  \( J \). 
It is shown in \cite{DiaHolLuz06} that  this
algorithm not only gives rise to a partition \( \mathcal P \) of \( J \)
(mod 0) but in fact escapes occur exponentially fast.

\subsection{The induced Markov map}
\label{inducedMark}
We are now ready to describe the algorithm for the construction of the
final Markov induced map.
We fix \( \Delta^{*} \) as in Lemma \ref{retpart} and aim to obtain a
map  \( F: \Delta^{*}\to\Delta^{*}\) with
a partition $\mathcal{Q}$ and a return time function
$T:\mathcal{Q}\to\mathbb{N}$ constant on elements of $\mathcal{Q}$
such that \( F(\omega)=f^{T(\omega)}(\omega)=\Delta^{*} \) for every \(
\omega \in \mathcal Q \).

First of all, starting with \( \Delta^{*} \),
we construct the escape time partition \( \mathcal P(\Delta^{*}) \) as
described in Section \ref{ss:escape times}.
Let \( \omega \in \mathcal P(\Delta^{*}) \) with some escape time \(
E(\omega) = n \).
By Lemma~\ref{retpart}, we can subdivide its image
$\omega_{n}=f^{n}(\omega)$ into three pieces
\[
\omega_{n}= \omega_{n}^{L}\cup \omega_{n}^{*}\cup \omega_{n}^{R}
\]
with
\[
\omega_{n+t_{0}}^{*}=f^{n+t_{0}}(\omega)=f^{t_0}(\omega_{n}^{*})=\Delta^{*}
\]
for some $t_{0} \leq t^{*}$, and
\[
|\omega_{n}^{L}|,|\omega_{n}^{R}|>\delta/3.
\]
The interval \( \omega^{*} \) becomes, by definition,  an
element of $\mathcal{Q}$ and we define
$$
T(\omega^{*})=E(\omega)+t_0(\o)=n+t_0(\o).
$$
The components
$\omega_{n}^{L}, \omega_{n}^{R}$ are treated as new starting
intervals and we repeat the algorithm: we construct an escape
partition of each of $\omega_{n}^{L}, \omega_{n}^{R}$  and then some
proportion of each escaping component returns to \( \Delta^{*} \)
within some uniformly bounded number of iterates.  Notice that if
either \(|\omega_{n}^{L}|\geq \delta\)  or \( |\omega_{n}^{R}|\geq
\delta \) we can skip the construction of the escape partition (or, in
some sense, this step is trivial) and immediately apply Lemma \ref{retpart}
to find a subinterval
which returns to \( \Delta^{*} \) after some finite number of iterates
bounded by \( t^{*} \). As far as the construction is concerned we
only apply the escape partition algorithm to
intervals \( J \) of length between \( \delta/3  \) and \( \delta \).

\section{Uniqueness}\label{sec:unique}
We can now prove  Theorem \ref{thm:unique}. 

\begin{lemma}\label{lem:return}
For \(m\) almost all   \( x\in I \), there exists  a neighbourhood \( V\) of \( x \) and \( N > 0 \) such that  \(f^{N}(V)\subseteq  \Delta^{*}\). 
\end{lemma}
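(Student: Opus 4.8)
The plan is to show that the set
\[
G:=\{\,x\in I:\ f^{N}(V)\subseteq\Delta^{*}\text{ for some open }V\ni x\text{ and some }N\geq1\,\}
\]
has full Lebesgue measure, which is exactly the conclusion of the lemma. Since $I$ is a finite union of intervals of length $<\delta$, it suffices to prove $m(J\setminus G)=0$ for an arbitrary interval $J\subseteq I$ with $|J|<\delta$.

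The key observation is that the construction recalled in Sections~\ref{ss:escape times}--\ref{inducedMark} applies verbatim with $J$ in place of $\Delta^{*}$ as the \emph{starting} interval, the \emph{target} interval of the induced scheme remaining $\Delta^{*}$. Concretely: one first builds the escape partition $\mathcal P(J)$, which by \cite{DiaHolLuz06} is a mod~$0$ partition of $J$ all of whose elements $\omega$ escape, i.e. $|f^{E(\omega)}(\omega)|\geq\delta$; one then applies Lemma~\ref{retpart} to each escaped interval $f^{E(\omega)}(\omega)$ to peel off a subinterval that is mapped diffeomorphically onto $\Delta^{*}$ by some $f^{t_{0}}$ with $t_{0}\leq t^{*}$, and feeds the two complementary intervals, of length $\geq\delta/3$, back into the algorithm; iterating produces a mod~$0$ partition $\mathcal Q(J)$ of $J$ into subintervals $\omega'$, each carrying an integer $N(\omega')$ with $f^{N(\omega')}\colon\omega'\to\Delta^{*}$ a $C^{2}$ diffeomorphism. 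For every $x$ in the interior of some $\omega'\in\mathcal Q(J)$ one may then take $V=\mathrm{int}(\omega')$ and $N=N(\omega')$, which gives $f^{N}(V)\subseteq\Delta^{*}$; since the interiors of the elements of $\mathcal Q(J)$ cover $J$ modulo a countable set, this yields $m(J\setminus G)=0$.

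The one point that needs care — and the step I expect to be the main obstacle — is that the above iteration \emph{exhausts} $J$ modulo a null set, i.e. that the set of points which lie in a leftover interval at every stage is $m$-null. I would establish this by checking that at each stage a uniformly positive proportion of every leftover interval is mapped onto $\Delta^{*}$ and thereby leaves the construction: if the leftover interval has length $\geq\delta$ this is immediate from part~(2) of Lemma~\ref{retpart} (proportion $\geq\xi$), while if its length lies in $[\delta/3,\delta)$ one first passes to the escape partition and then pulls the subinterval provided by Lemma~\ref{retpart} back through the escape branch $f^{E(\omega)}$, invoking the bounded distortion of these branches from \cite{DiaHolLuz06} to see that the pulled-back piece still occupies a proportion bounded away from $0$. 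It follows that the total measure of the leftover intervals decays geometrically in the number of stages, which gives the claim. Equivalently, this exhaustion is precisely the statement — proved in \cite{DiaHolLuz06}, and in any case built into the definition of $\mathcal F$ — that the induced map constructed by this very algorithm is defined at $m$-almost every point, together with the fact (also from \cite{DiaHolLuz06}) that each escape partition is a mod~$0$ partition.
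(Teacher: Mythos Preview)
Your proposal is correct and follows essentially the same approach as the paper: both extend the induced-map construction from $\Delta^{*}$ to all of $I$ (you do it interval by interval for each $J$ with $|J|<\delta$, the paper does it in one go via an initial subdivision of $I$ into the critical partition on $\Delta$ together with intervals of length in $[\delta/3,\delta]$ outside), and then read off the neighbourhoods $V$ from the branches of the resulting mod~$0$ partition mapping onto $\Delta^{*}$. Your exhaustion step is exactly the almost-everywhere definedness of the induced map established in \cite{DiaHolLuz06}, which the paper simply invokes; the paper's extra remark about iterating $\widehat F$ to get arbitrarily fine partitions is not needed for the lemma as stated and your argument does without it.
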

\begin{proof}
Notice that the  construction of the escape time partition, and indeed, the construction of the full induced map can actually be carried out on the entire interval \( I \), by trivial modifications of the arguments given above. Indeed, starting with a partition of \( I \) given by the critical partitions in the critical neighbourhoods, and subdividing the complement of the critical neighbourhoods into subintervals of length between \( \delta/3\) and \( \delta\), we can repeat exactly all the steps described above. Thus we can obtain a full branch Gibbs-Markov induced map \( \widehat F: I \to \Delta^{*}\).  Iterating \( \widehat F \) we get arbitrarily fine partitions of \( I \) whose elements map bijectively to \( \Delta^{*}\), which implies in particular the statement of the Lemma. 
\end{proof}

\begin{proof}[Proof of Theorem \ref{thm:unique}]
By the construction in \cite{DiaHolLuz06}  there exists a full branch induced Gibbs-Markov with integrable return times with inducing domain \( \Delta^{*}\). It follows that the measure   \( \mu_{f}\) is \emph{equivalent}  to Lebesgue in \( \Delta^{*}\). It is therefore sufficient to show that any  invariant absolutely continuous probability measure \( \nu \) also gives positive measure to \( \Delta^{*}\) which then implies that \( \nu = \mu_{f}\), see \cite[Lemma 3.12]{Alv20}. To see that this is the case, by  Lemma \ref{lem:return} there is a point \( x \)  in the support of \( \nu \), a neighbourhood \( V \) of \( x \), and \( N>0 \), such that   \(f^{N}(V)\subseteq  \Delta^{*}\). Since \( x \) is in the support of \( \nu \) we have \( \nu (V) >0\), and since \( \nu \) is invariant this implies that \( \nu (f^{N}(V))>0\) and therefore \(\nu ( \Delta^{*})>0 \).  
\end{proof}

\section{Small Return Times}\label{sec:small}
We can now complete the proof of Proposition \ref{prop:main2}. 

\begin{proof}[Proof of Proposition \ref{prop:main2}] 
 Notice first of all that the closeness of two maps \(f, g \in \mathcal F \) does not necessarily imply that the critical points  \( c^{*}_{f}, c^{*}_{g}\) given by condition (H3) are the corresponding points in the ordering~\eqref{eq:critorder}, and therefore does not imply that \( c^{*}_{f}, c^{*}_{g}\) are close. However, the conclusions of   Lemma \ref{retpart} are \emph{open} with respect to the metric on the family \( \mathcal F \) and therefore continue to hold if we replace the interval \( \Delta^*_f\) by the corresponding neighborhood \( \Delta_g\) of the corresponding critical point \(c_g\) which is close to \( c_f\) in the ordering~\eqref{eq:critorder}  even though this may, a priori, not be the point with dense preimages for \( g\) given by condition (H3). In particular we can carry out the  construction of the induced map for \( g \), as described in Section \ref{escape_part}, on the  \( \Delta_g\) instead of \( \Delta^*_g\). This allows us to  assume, in order to simplify the notation,  that \(c^*_g\) is in fact the corresponding critical point to \( c^*_f\) and therefore close to \( c^*_f\) for \( g \) close to \( f\) and that therefore the induced map for \( g \) is constructed on the domain \( \Delta^*_g\) close to \( \Delta^*_f\).

 We thus have defined two induced maps \( F:\Delta^*_f\to \Delta^*_f\) and  \( G:\Delta^*_g\to \Delta^*_g\) with inducing time functions \( T_f, T_g\) respectively. We fix \( N \geq 1 \) and \( \epsilon > 0 \) as in Proposition \ref{prop:main2} and consider the corresponding level sets \(\{T_{f}=j\} \) and \(  \{T_{g}=j\}\) for \( j \leq N \). Recall that the construction of the induced map is based on the notions of binding period, free period, return time and escape time, all of which depend continuously on the dynamics for a uniformly bounded number of iterates.  Therefore, since  the maps \( f, g \) can be made arbitrarily close, it follows that the level sets \(\{T_{f}=j\} \) and \(  \{T_{g}=j\}\), which are unions of intervals defined through these notions,  can also be made arbitrarily close in terms of the Lebesgue measure of their symmetric difference. 
 \end{proof}

%

\section{Uniform Expansion}
\label{sec_bind}

We now begin the proof of Proposition \ref{prop:main} by showing that the constant \( \sigma\) related to the expansivity of the induced map, recall \eqref{ref:unif1}, can be chosen uniformly in the family \( \mathcal F \). 
From the construction of the induced map it follows  that the orbit of every \( x\in \Delta^* \) up to its inducing time is formed by pieces of orbit which are either outside \( \Delta^{c}_{f}\) or in a binding period. Outside \( \Delta^{c}_{f}\) we have the expansivity given by assumption  (H1)  and, following returns to \( \Delta^{c}_{f}\), we have a binding period which yields expansivity estimates as follows. 

\begin{lemma}[{\cite[Lemma 2]{DiaHolLuz06}}]\label{bindexp}
    There exist constants $\theta, \hat\theta>0$ 
    such that for all points \( x\in \hat
    I_{r}, \) and $p=p(r) \geq 0$ we have
 \[
 |(f^{p+1})'(x)|\geq \frac{1}{\kappa} e^{\theta r}\geq
 \frac{1}{\kappa}e^{\hat\theta (p+1)}
\]
where \( \kappa>0 \) is the constant in the expansivity condition
 (H1). Moreover, the constants \( \theta, \hat\theta\) can be chosen uniformly in $\mathcal F$. 
\end{lemma}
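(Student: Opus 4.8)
The plan is to run the classical binding--period argument of Benedicks--Carleson type, keeping every constant under explicit control so that the uniformity over \( \mathcal F \) becomes transparent. Fix \( x\in\hat I_r \), with \( \hat I_r \) lying in the component of \( \Delta_f \) around a critical point \( c \), and note that \( |x-c|\asymp e^{-r} \) up to a bounded factor. If \( c\in\mathcal C^{s}_{f} \) is a singular point then \( p(r)=0 \) by \eqref{ss:binding} and the assertion reduces to a lower bound on \( |f'(x)| \): the non-degeneracy condition \eqref{defnCmap1} gives
\[
|f'(x)|\ \gtrsim\ |x-c|^{\ell_c-1}\ =\ |x-c|^{-(1-\ell_c)}\ \gtrsim\ e^{(1-\ell_c)r},
\]
which yields the conclusion with \( \theta \) any uniform lower bound for \( 1-\ell_c \) over singular points and \( \hat\theta\le\theta \) (here \( p+1=1 \)). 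The content of the lemma is therefore the case \( c\in\mathcal C^{c}_{f} \).

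In that case I would write \( |(f^{p+1})'(x)|=|f'(x)|\cdot|(f^{p})'(f(x))| \) and estimate the two factors separately. The first is bounded below by \eqref{defnCmap1} as \( |f'(x)|\gtrsim e^{-(\ell_c-1)r} \), which is exponentially \emph{small} in \( r \) when \( \ell_c>1 \); compensating this loss is the whole difficulty. For the second factor one compares the orbit of \( f(x) \) with that of \( c_1=f(c) \) along the binding period: the shadowing inequality \( |f^{j+1}(x)-f^{j+1}(c)|\le\delta e^{-2\alpha j} \) built into the definition of \( p(r) \), together with the slow--recurrence bound \( \DD(c_k)\ge\delta e^{-\alpha k} \) from (H2), controls the sum \( \sum_{j\le p}\bigl|\log|f'(f^{j}(x))|-\log|f'(c_{j})|\bigr| \) and produces a uniform bounded--distortion estimate \( |(f^{p})'(f(x))|\asymp|(f^{p})'(c_1)| \) on \( \hat I_r \). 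The factor \( |(f^{p})'(c_1)| \) is then estimated from (H2) by \( |(f^{p})'(c_1)|\ge e^{\Lambda p} \), and from the fact that the binding period is on the verge of ending at time \( p+1 \) — passing between \( |f(x)-f(c)| \) and \( |x-c|^{\ell_c}\asymp e^{-\ell_c r} \) via \eqref{defnCmap1}, and bounding \( |f'(c_{p+1})| \) once more through the slow recurrence in (H2) — by \( |(f^{p})'(c_1)|\gtrsim\delta\,e^{\ell_c r}\,e^{-O(\alpha)(p+1)} \). Multiplying the two factors gives \( |(f^{p+1})'(x)|\gtrsim\delta\,e^{r}\,e^{-O(\alpha)(p+1)} \).

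It remains to trade \( p \) against \( r \). Comparing the same two estimates of \( |(f^{p})'(c_1)| \) — the lower bound \( e^{\Lambda p} \) against the shadowing bound \( \delta e^{\ell_c r}e^{-2\alpha p} \), and the distortion estimate against the uniform upper growth rate of the critical orbit (itself a consequence of (H1), (H2) and \eqref{defnCmap1}) — yields uniform constants \( 0<C_0\le C_1 \) with \( C_0 r-O(1)\le p+1\le C_1 r+O(1) \), so in particular \( p+1\le C_1 r \) once \( r\ge r_\delta+1 \). Feeding \( p+1\lesssim r \) into the previous display, using that \( \alpha<\lambda/5\hat\ell \) is small (so \( e^{-O(\alpha)(p+1)}\ge e^{-O(\alpha)r} \) only mildly erodes the exponent), and using \( r\ge r_\delta+1=\log\delta^{-1}+1 \) to absorb the prefactor \( \delta \), one gets \( |(f^{p+1})'(x)|\ge\kappa^{-1}e^{\theta r} \) for a suitable \( \theta>0 \); the second inequality then follows with \( \hat\theta:=\theta/C_1 \). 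Every constant entering the estimates — the non-degeneracy constant \( C \), the exponents \( \ell,\hat\ell \), the rates \( \lambda,\Lambda \), the constants \( \alpha,\delta,\kappa \), and the distortion constant — is uniform over \( \mathcal F \) by the standing hypotheses, hence so are \( \theta \) and \( \hat\theta \).

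The step I expect to be the main obstacle is exactly this balancing around the first iterate: for a genuine critical point the loss \( |f'(x)|\asymp e^{-(\ell_c-1)r} \) is substantial, and one must show that the binding period lasts long enough and that the critical orbit expands fast enough that the net growth over the \( p+1 \) iterates is still \( e^{\theta r} \) with \( \theta>0 \). This is precisely where the quantitative growth and recurrence in (H2) and the smallness of \( \alpha \) are indispensable, and it is the only non-routine point of the argument; the uniformity assertion, by contrast, requires nothing more than checking that no non-uniform quantity is ever used.
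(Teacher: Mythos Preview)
Your sketch is correct: it is the standard Benedicks--Carleson binding--period computation, which is precisely what \cite[Lemma~2]{DiaHolLuz06} carries out, and you track the dependence of each constant on the uniform data \( C,\ell,\hat\ell,\lambda,\Lambda,\kappa,\alpha,\delta \) as you go. The paper, however, does not reproduce this argument at all. Its proof simply cites \cite[Lemma~2]{DiaHolLuz06} for the existence of \( \theta,\hat\theta \), then extracts from that proof the explicit formulas \( \theta=\min_{c\in\mathcal C^{c}}\theta_c \) with \( \theta_c=1-5\alpha\ell_c/\Lambda \), and \( p\le 2\hat\ell r/\Lambda \) (so that any \( \hat\theta\in(0,\theta\Lambda/2\hat\ell) \) works), and observes that these depend only on \( \alpha,\Lambda,\hat\ell \), hence are uniform by~\eqref{eq:alpha}. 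So the paper's argument is a two--line bookkeeping check against explicit formulas, whereas yours reconstructs those formulas from scratch. Your route is self--contained and makes the mechanism visible; the paper's is far shorter but opaque without \cite{DiaHolLuz06} in hand. One small point: in your singular case you invoke ``any uniform lower bound for \( 1-\ell_c \) over singular points'', which is not among the standing hypotheses (only \( \ell<\ell_c<\hat\ell \) is assumed); the paper sidesteps this by taking \( \theta \) as the minimum over \( \mathcal C^{c} \) only, relying on \cite{DiaHolLuz06} for the singular case.
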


\begin{proof}
The existence of \( \theta, \hat\theta\) is given in \cite[Lemma 2]{DiaHolLuz06} and it therefore just remains to show that \( \theta, \hat\theta\) can be chosen uniformly in \( \mathcal F\). To see this, we observe that it follows from the proof of  \cite[Lemma 2]{DiaHolLuz06}, that  \(
 \theta=\min\{\theta_{c}: c\in\mathcal{C}^{c}\}
 \) where \( \mathcal{C}^{c}\) is the set of (one-sided) critical points,  
\( \theta_{c}= 1- {5\alpha\ell_{c}}/{\Lambda} \), where \( \ell_c\) is the order of the critical point given by \eqref{defnCmap1}, and \( \alpha, \Lambda\) are given in condition (H2).  Both \( \alpha \) and \( \Lambda\) are uniform on \( \mathcal F \) by assumption and so condition \eqref{eq:alpha}  guarantees that the constants \( \theta_c\) are uniformly bounded away from 0.  Hence  \( \theta\) can be chosen uniformly on \( \mathcal F \). Recalling from the \cite[Equation 6]{DiaHolLuz06} that \( p  \leq {2\hat\ell r}/{\Lambda} \), we conclude  that \(\hat  \theta \) can be chosen (sufficiently small  between \( 0 \) and \( \theta\Lambda/2\hat\ell \)) uniformly in~\( \mathcal F \). 
\end{proof}

Combining the estimates in (H1)  and Lemma \ref{bindexp} we get that, for any \( x\in \Delta^* \)  and \( T=T(x)\),
\begin{equation}\label{property1}
|(f^{T})'(x)| \geq  \min\{e^{\lambda T}, e^{\hat\theta T}\}\geq \min\{e^{\lambda}, e^{\hat\theta}\}=:\sigma> 1. 
\end{equation}
Since $\lambda$ is given by (H1) and $\hat\theta$ can be chosen uniformly, it follows that  $\sigma$ can be chosen uniformly in \( \mathcal F\). 

\section{Uniform Distortion}
\label{distest}

We continue the proof of Proposition \ref{prop:main} by showing the uniformity of the distortion bound \( \mathcal D \) in~\eqref{ref:unif1}. For a given map \( f\in \mathcal F \) this  is  obtained  in \cite[Proposition 3]{DiaHolLuz06}. The proof is quite technical and requires the introduction of a number of intermediate constants \( \mathcal D_1\) to~\(\mathcal D_{11}\), which make up the final constant \( \mathcal D \). We will refer to the various sections of the proof of \cite[Proposition 3]{DiaHolLuz06}  and argue that each such constant can be chosen uniformly in~\( \mathcal F \), thus implying the same for \( \mathcal D \). 

The first constant \( \mathcal D_1\) comes from an argument in \cite[Section 5.2]{LuzTuc99} asserting that for 
all $x, y \in f(\hat I_{r})$, recall the construction in Section \ref{critpart}, and $1\leq k \leq p$, i.e. during the binding period, 
\begin{equation*}\label{binddist}
\left| \frac{(f^{k})'(x)}{(f^{k})'(y)} \right| \leq \mathcal
D_{1}.
\end{equation*}
 It can be seen from the proof of \cite[Proposition 5.4]{LuzTuc99} that \( \mathcal
D_{1} \)  depends only on \( \alpha\) and therefore can be chosen uniformly in \( \mathcal F \). 

To introduce the other constants, we now present the general framework. 
 Let \( \omega\subset \hat
J \) be an arbitrary interval, \( n\geq 1 \) an integer such
that \( \omega \) has a sequence \( t_{0},\ldots, t_{q} \leq n \) of
\emph{free} returns to \( \Delta \) (with respective return depth
sequence \( r_{t_0},\ldots,r_{t_q} \))
followed by corresponding
binding periods \( [t_{m}+1, t_{m}+p_{m}] \),  as described above.
In particular, for \( m=1,\ldots, q \), the interval \(
\omega_{t_{m}} \) is contained in the union of three adjacent
elements of the form \( I_{r,j} \) of the critical partition \(
\mathcal I \) of \( \Delta \). 
For $j\geq 0$, let $x_j=f^{j}(x),\ y_j=f^{j}(y)$,
$\omega_j=f^j(\omega)$ and fix  \( 1\leq k \leq n \). 

The second constant \( \mathcal D_2\) appears in \cite[Lemma 3]{DiaHolLuz06}
which says that 
\begin{equation}\label{bd3}
    \log\left|
    \frac{(f^{k})'(x_0)}{(f^{k})'(y_0)}\right| \leq \mathcal D_{2}
    \sum_{j=0}^{k-1} \frac{|\omega_{j}|}{\mathfrak D (\omega_{j})},
\end{equation}
    where \( \mathfrak D (\omega_{j}) := \sup\{\mathfrak D(x): x\in \omega\} \).  
This comes from the observation that
by the chain rule and the convexity of the
$\log$ function we have
\begin{equation}\label{bd1}
\log\left| \frac{(f^{k})'(x_0)}{(f^{k})'(y_0)}\right| =
\sum_{j=0}^{k-1}\log\left| 1+\frac{f'(x_j)-f'(y_j)}{f'(y_j)}
\right|\leq \sum_{j=0}^{k-1} \frac{|f'(x_j)-f'(y_j)|}{|f'(y_j)|}
\leq 
\sum_{j=0}^{k-1} 
\frac{|f''(\xi_{j})|}{|f'(y_j)|} |\omega_{j}|.
\end{equation}
where   \( \xi_j \in \omega_j\) is given by the Mean Value Theorem. 
Then, from  \eqref{defnCmap1}  we have 
\[ 
\frac{|f''(\xi_{j})|}{|f'(y_j)|} 
\leq C^2
\frac{|\xi_j-c|^{\ell_c-2}}{|y_j-c|^{\ell_c-1}} = 
C^2
\left|\frac{\xi_j-c}{y_j-c}\right|^{\ell_c-1} \cdot \frac{1}{|\xi_j-c|}
\]
where \( C \) is a uniform constant. Recall that both \( \xi_j\) and \( y_j\) belong to the interval \( \omega_j\).
If \(\omega_j\) is outside the critical/singular neighbourhoods then the distance of both points to the critical point~\( c \) is uniformly comparable and we get a uniform bound.  
 If \( \omega_j\) is in a neighbourhood of the critical point \( c \) then by construction it is contained in an interval \( \hat I_r,j\) defined in Section \ref{critpart}, and in particular \( |\omega_j|\lesssim e^{-r}/r^2\). Therefore we can write 
\[ 
\left|\frac{\xi_j-c}{y_j-c}\right|^{\ell_c-1} \leq 
 \left(1+ \frac{ |\xi_j-y_j|}{|y_j-c|}\right)^{\ell_c-1}
\leq 
 \left(1+ C' \frac{e^{-r}/r^2}{e^{-r}}\right)^{\ell_c-1}
 =  \left(1+ C' \frac{1}{r^2}\right)^{\ell_c-1}
\] 
for some uniform constant \( C'>0\) coming from the definition of the intervals \( \hat I_{r,j}\) and, letting \( z_j\) be the endpoint of \( \omega_j\) such that \( |z_j-c| = \mathfrak D(\omega_j)\) we have  
\[
\frac{1}{|\xi_j-c|} 
= \frac{1}{|z_j-c|} \cdot \frac{ |z_j-c|}{|\xi_j-c|}
\leq  \frac{C''}{\mathfrak D(\omega_j)}
\]
since the ratio \( { |z_j-c|}/{|\xi_j-c|} \) can be bounded uniformly just like in the previous equation. Substituting the these bounds into the expressions above and then into \eqref{bd1} we get \eqref{bd3} for some uniform constant \( \mathcal D_2\) as required. 

%
%
%

The third constant \( \mathcal D_3\) appears in \cite[Lemma 4]{DiaHolLuz06}, where it is obtained by bounding the a sum 
\begin{equation*}
 \sum_{j=t_{m-1}+p_{m-1}+1}^{t_{m}-1} \kappa^{-1} e^{ -\lambda (t_{m}-j)}
\leq \mathcal D_{3}  |\omega_{t_{m}}| e^{r_{t_{m}}},
\end{equation*}
where the \( t\)'s are \emph{return depths} and the \( p \)'s are the corresponding binding periods. In particular, \( \mathcal D_3\) depends only on \( \kappa, \lambda\) both of which are fixed by condition (H1). 

The fourth constant \( \mathcal D_4\) appears in \cite[Lemma 5]{DiaHolLuz06} where  it is formulated in terms  of further constants \( \mathcal D_{4}=\mathcal
D_{7}\sum_{i=0}^{\infty}  e^{-\alpha i}\), wehere 
\( \mathcal D_{7}=\mathcal
D_{5}\mathcal D_{6} \), which are introduced in the proof of the Lemma, and  \( \alpha\) is given by (H2).  It is therefore sufficient to show that the constants \(\mathcal
D_{5}, \mathcal D_{6} \) can be chosen uniformly. 
The constant \( \mathcal D_5\) appears in \cite[Sublemma 5.1]{DiaHolLuz06} and it is clear from the proof of this sublemma that \( \mathcal D_5\)  depends only on \( \mathcal D_1\) and the constant   in
 \eqref{defnCmap1}. 
The constant \( \mathcal D_6\) appears in \cite[Sublemma 5.2]{DiaHolLuz06} and is defined explicitly as \( \mathcal D_6 = 1-e^{-\alpha}\). 

Moving on, the constant \( \mathcal D_8\) is defined in  \cite[Subsection 4.4]{DiaHolLuz06} as the sum of \( \mathcal D_3\) and~\( \mathcal D_4\) and \( \mathcal D_9\) is obtained in the proof of  \cite[Sublemma 5.3]{DiaHolLuz06} as a bound for the geometric sum  \( \sum_{i=0}^{\infty}  e^{-\theta r_\delta i}\) where \( \theta \) is the uniform constant given in Lemma \ref{bindexp} above, and \( r_\delta = \log\delta^{-1}\) which is uniform by definition, recall \eqref{eq:alpha}. The constant \( \mathcal D_{10}\) is defined explicitly at the end of \cite[Subsection 4.4]{DiaHolLuz06} as \( \mathcal D_{10}= \mathcal D_3+ \mathcal D_8\mathcal D_9\sum_r 1/r^2\). Finally, the constant \( \mathcal D_{11}\) is obtained in \cite[Subsection 4.5]{DiaHolLuz06} depending only on the two  constants \( \mathcal D_2\) and \( \mathcal D_{10}\).

We have therefore verified that all the constants \( \mathcal D_1\)-\( \mathcal D_{11}\) can be chosen uniformly in the family \( \mathcal F \) and thus the same is true for  the distortion constant \( \mathcal D \) of Proposition \ref{prop:main}.

\section{Tail of Inducing Time}\label{ret_time}
It only remain to prove that the constants \( C, \gamma\), which bound the tail of the inducing time in~\eqref{eq:Cgammatail} can be chosen uniformly in \( \mathcal F \). Recall first of all, from Section \ref{inducedMark}, that the induced map \( F: \Delta^{*}\to\Delta^{*}\)  has a partition \( \mathcal Q \) and a return time function
$T:\mathcal{Q}\to\mathbb{N}$ constant on elements of $\mathcal{Q}$
such that \( F(\omega)=f^{T(\omega)}(\omega)=\Delta^{*} \) for every \(
\omega \in \mathcal Q \).  We let 
\[
\mathcal{Q}^{(n)}=\{\omega\in \mathcal Q: T(\omega)>n\}
 \]
In  \cite[Proposition 2]{DiaHolLuz06}  it is shown that there are constants \( C_2, \gamma_2\)  such that 
\begin{equation*}
|\mathcal{Q}^{(n)}|\leq C_2e^{-\gamma_2 n}|\Delta^{*}|.
\end{equation*}
Our bound in \eqref{eq:Cgammatail} then follows by choosing \( C= C_2\) and \( \gamma=\gamma_2\). 
It is therefore sufficient to show that \( C_2, \gamma_2\) can be chosen uniformly in \( \mathcal F \) 
We will refer to the argument in \cite[Section~5 and~6]{DiaHolLuz06} to show that this is the case.

By construction, each \( \omega\in \mathcal Q^{(n)} \)
is contained in a nested sequence of intervals
\[
\omega\subset\omega^{(s)}\subset\omega^{(s-1)}
\subset\ldots\subset\omega^{(1)} \subset \Delta^{*}
\]
corresponding to escape times $E_1,\ldots,E_s$, such that \(
|f^{E_i}(\omega^{(i)})|\geq\delta \) for  \( i=1\ldots, s \). This
sequence is empty for those elements of \( \mathcal Q^{(n)} \) which
have not had any escape before time~\( n \) (such as those which
start very close to the critical point). For \( s = 0,\ldots, n \)
we let \(
\mathcal{Q}^{(n)}_s \) denote the collection of intervals in \(
\mathcal{Q}^{(n)} \) which have exactly \( s \) escapes before time
\( n \). Then for   any constant $\zeta\in (0,1)$ we write
\begin{equation}\label{set_split}
|\Q^{(n)}| =\sum_{s\leq n}|\Q_s^{(n)}|= \sum_{s\leq \zeta
n}|\Q_s^{(n)}| + \sum_{\zeta n<s \leq  n}|\Q_s^{(n)}|.
\end{equation}
This corresponds to distinguishing those intervals which
have had lots of escape times and those
that have had only a few.  It is proved in \cite[Lemma 10 and 11]{DiaHolLuz06} that there exist constants \( C_3, C_4, \gamma_3, \gamma_4 > 0\) such that for sufficiently small \( \zeta>0\) and for all \( n\geq 1 \), 
\begin{equation}\label{c3c4}
     \sum_{0\leq s\leq \zeta n}|\Q_s^{(n)}|  \leq C_{3}
e^{-\gamma_{3} n} |\Delta^{*}|
\quand 
  \sum_{\zeta n<s \leq  n}|\Q_s^{(n)}|  \leq C_{4}
e^{-\gamma_{4} n} |\Delta^{*}|.
\end{equation}
It is therefore sufficient to show that the constants \( C_3, C_4, \gamma_3, \gamma_4 \) can be chosen locally uniform. The first step is explain how these constants appear in \cite{DiaHolLuz06}. 
The proof of \cite[Lemma 11]{DiaHolLuz06}  shows that  
\begin{equation}\label{many_esc}
\sum_{\zeta n<s<n}|\Q_s^{(n)}|\leq \sum_{\zeta
n<s<n}\left(1-\frac{\xi}{\mathcal D}\right)^{s-1}
|\Delta^{*}|
\leq \frac{\mathcal D}{\xi}\left(1-\frac{\xi}{\mathcal D}\right)^{\zeta n}|\Delta^{*}|.
\end{equation}
which clearly gives the second inequality in \eqref{c3c4} for the obvious choices of \( C_4, \gamma_4\). 
For  the first inequality in \eqref{c3c4}, the proof of \cite[Lemma 10]{DiaHolLuz06} shows that 
    \begin{equation}\label{eq:firstineq}
  \sum_{0\leq s\leq\zeta n}|\Q_s^{(n)}|
    \leq \sum_{0\leq s\leq\zeta n}N_{n,s}
  \frac{C_1}{\delta^{*}}  
  \left(\frac{C_1\mathcal D}{\delta}\right)^s
     e^{-\gamma_1 n}|\Delta^{*}|,
    \end{equation}
where $N_{n,s}$ is the number of possible integer sequences
$(t_1,\ldots, t_{s})$ such that $\sum t_j=n$. 
The number   of such
 sequences is the number of ways to choose $s$ balls from a row of $k+s$
 balls, thus partitioning the remaining $k$ balls into at most $s$
 disjoint subsets.  Therefore, using also that \( s\leq
  \zeta n \), we have 
 \[
 N_{n,s}\leq\begin{pmatrix}
 n+s\\ s
 \end{pmatrix}
 =\begin{pmatrix} n+s\\ n
 \end{pmatrix} \leq 
 \begin{pmatrix}
 (1+\zeta)n \\ n
 \end{pmatrix}
 =\frac{[(1+\zeta)n]!}{(\zeta n)! n!}.
\]
 Using Stirling's  formula
 $k!\in [1,1+\frac{1}{4k}]\sqrt{2\pi k}k^{k}e^{-k}$
 we obtain
\begin{equation}\label{ns}
 N_{n,s}
 \leq\frac{[(1+\zeta)n]^{(1+\zeta)n}}{(\zeta n)^{\zeta n}n^n}
 \leq \exp\{((1+\zeta)\zeta -\zeta \log\zeta)n\}=e^{\hat\zeta n}
\end{equation}
 where 
 \[
 \hat\zeta =
 \bigl((1+\zeta)\zeta -\zeta \log\zeta\bigr).
 \]
Replacing this in \eqref{ns} and then in \eqref{eq:firstineq} gives \footnote{Note that the term \( {C_1 \mathcal D}/{(C_1\mathcal D - \delta)}\) is missing by mistake in the last displayed formula of \cite[Subsection 6.1]{DiaHolLuz06}, though this has no effect on the results there (and here). 
} 
\begin{equation}\label{c3}
\sum_{0\leq s\leq\zeta n}|\Q_s^{(n)}| \leq
\frac{C_{1}}{\delta^{*}} 
\frac{C_1 \mathcal D}{C_1\mathcal D - \delta}
\cdot
\left(\frac{C_{1}\mathcal
D}{\delta}\right)^{\zeta n} e^{\hat \zeta n}
e^{-\gamma_{1} n} |\Delta^{*}|.
\end{equation}
By choosing  \( \zeta \) sufficiently small,  and thus making  \( \hat\zeta \) arbitrarily small,  \eqref{c3}  gives the first inequality in \eqref{c3c4} for the obvious choices of \( C_3, \gamma_3\). 

We have thus shown that the constants  \( C_3, C_4, \gamma_3, \gamma_4\) depend on a number of other constants, as shown in the first two lines of Table~\ref{dep} below. The remainder of the table refers to appropriate places in this paper and in  \cite{DiaHolLuz06} to establish  a chain of dependencies which ultimately reduces to ``primary" constants which we know are uniform, either by assumption or by the arguments in the previous sections. 

%
%
%
%

 

\medskip

\begin{center}
\begin{table}[h]
\begin{tabular}{|c|c|c|}
  \hline
  Constant & Dependence & Reference \\ \hline

$ C_3, \gamma_3$ &  $C_1, \mathcal D, \delta^*, \delta, \zeta, \hat\zeta, \gamma_1 $ & Equation \eqref{c3} \\

 $ C_4, \gamma_4$ &  $ \xi, \mathcal D, \zeta $ & Equation \eqref{many_esc}\\
 
     $ \zeta, \hat\zeta $ &  $ C_1, \mathcal D,  \delta,  \gamma_1 $ & Equation \eqref{c3} \\
  
      $ $ &  $ $ & \\
      
 $ C_1, \gamma_1$ &  $\tilde C_1, \theta, \tilde \theta, \tilde\eta, n_\delta $ & \cite[Proposition 1, Equation after Lemma 9]{DiaHolLuz06} \\

         $ \tilde C_1$ &  $\theta, \tilde\theta, \tilde\eta, n_\delta  $ &  \cite[Proposition 1, Equation after Lemma 9]{DiaHolLuz06} \\
    
      $ \tilde\eta$ &  $\eta_1, \eta_2, \eta_3 $ & \cite[End of Section 5.2]{DiaHolLuz06} \\

     $ \tilde\theta$ &  $\eta_3, \hat\lambda $ & \cite[End of Section 5.4]{DiaHolLuz06}\\

    $ n_\delta$ &  $\delta, \lambda, \hat\lambda, \kappa $ & \cite[Proof of Lemma 8]{DiaHolLuz06} \\

        $ \eta_1$ &  $ \eta$ & \cite[Section 5.2, end of Page 445]{DiaHolLuz06} \\

    $ \eta_2$ &  $ r_\delta, N_c$ & \cite[Equation (24)]{DiaHolLuz06}  \\

        $ \eta_3$ &  $ r_\delta $ &  \cite[Equation (25)]{DiaHolLuz06}  \\

    $ \eta$ &  $r_\delta $ &  \cite[Section 5.2, Page 445]{DiaHolLuz06}  \\
    
   $ \hat\lambda, \tilde\lambda$ &  $ \lambda, \hat\theta, \Lambda$ & \cite[Section 5.3]{DiaHolLuz06} \\
     
              $r_\delta  $ &  $ \delta$ & Condition \eqref{rdelta}\\

    $ $ &  $ $ & \\
    
     $ \xi $ & locally uniform  &  Lemma \ref{retpart} \\

      $  \delta^*$ & uniform  &  Lemma \ref{retpart} \\
 
 $\mathcal D $ & uniform &  Section \ref{distest} \\
 
 $ \theta, \hat\theta$ &  uniform & Lemma \ref{bindexp}\\
 
       $ $ &  $ $ & \\

     $ N_c$ & uniform & Assumption \eqref{card} \\

 $\lambda, \Lambda, \kappa,  \delta$ &  uniform & Assumption \eqref{eq:alpha}\\
  
  \hline
\end{tabular}
\bigskip
\caption{\small Dependencies of constants}
\label{dep}
\end{table}
\end{center}

\newpage

\begin{bibdiv}
\begin{biblist}

\bib{Alv04}{article}{
      author={Alves, Jos{\'e}~F.},
       title={Strong statistical stability of non-uniformly expanding maps},
        date={2004},
     journal={Nonlinearity},
      volume={17},
      number={4},
       pages={1193\ndash 1215},
}

\bib{Alv20}{book}{
      author={Alves, Jos{\'e}~F.},
       title={Nonuniformly hyperbolic attractors. geometric and probabilistic
  aspects},
      series={Springer Monographs in Mathematics},
   publisher={Springer International Publishing},
        date={2020},
}

\bib{AlvBonVia00}{article}{
      author={Alves, Jos{\'e}~F.},
      author={Bonatti, Christian},
      author={Viana, Marcelo},
       title={S{RB} measures for partially hyperbolic systems whose central
  direction is mostly expanding},
        date={2000},
     journal={Invent. Math.},
      volume={140},
      number={2},
       pages={351\ndash 398},
}

\bib{AlvCarFre10}{article}{
      author={Alves, Jos{\'e}~F.},
      author={Carvalho, Maria},
      author={Freitas, Jorge~Milhazes},
       title={Statistical stability and continuity of {SRB} entropy for systems
  with {G}ibbs-{M}arkov structures},
        date={2010},
     journal={Comm. Math. Phys.},
      volume={296},
      number={3},
       pages={739\ndash 767},
}

\bib{AlvCarFre10b}{article}{
      author={Alves, Jos{\'e}~F.},
      author={Carvalho, Maria},
      author={Freitas, Jorge~Milhazes},
       title={Statistical stability for {H}{\'e}non maps of the
  {B}enedicks-{C}arleson type},
        date={2010},
     journal={Ann. Inst. H. Poincar\'e Anal. Non Lin\'eaire},
      volume={27},
      number={2},
       pages={595\ndash 637},
}

\bib{AlvDiaLuz13}{article}{
      author={Alves, Jos{\'e}~F.},
      author={Dias, Carla~L.},
      author={Luzzatto, Stefano},
       title={Geometry of expanding absolutely continuous invariant measures
  and the liftability problem},
        date={2013},
     journal={Ann. Inst. H. Poincar\'e Anal. Non Lin\'eaire},
      volume={30},
      number={1},
       pages={101\ndash 120},
}

\bib{AlvDiaLuz17}{article}{
      author={Alves, Jos{\'e}~F.},
      author={Dias, Carla~L.},
      author={Luzzatto, Stefano},
      author={Pinheiro, Vilton},
       title={S{RB} measures for partially hyperbolic systems whose central
  direction is weakly expanding},
        date={2017},
     journal={J. Eur. Math. Soc. (JEMS)},
      volume={19},
      number={10},
       pages={2911\ndash 2946},
}

\bib{AlvKha19}{article}{
      author={Alves, Jose~F.},
      author={Khan, Muhammad~Ali},
       title={Statistical instability for contracting {L}orenz flows},
        date={2019},
     journal={Nonlinearity},
      volume={32},
}

\bib{AlvLep15}{article}{
      author={Alves, Jos{\'e}~F.},
      author={Leplaideur, Renaud},
       title={{SRB} measures for almost {A}xiom {A} diffeomorphisms},
        date={2015},
     journal={Ergodic Theory Dynam. Systems},
       pages={1\ndash 29},
}

\bib{AlvLi15}{article}{
      author={Alves, Jos{\'{e}}~F.},
      author={Li, Xin},
       title={{G}ibbs-{M}arkov-{Y}oung structures with (stretched) exponential
  tail for partially hyperbolic attractors},
        date={2015},
     journal={Adv. Math.},
      volume={279},
      number={0},
       pages={405 \ndash  437},
}

\bib{AlvLuzPin05}{article}{
      author={Alves, Jos{\'e}~F.},
      author={Luzzatto, Stefano},
      author={Pinheiro, Vilton},
       title={Markov structures and decay of correlations for non-uniformly
  expanding dynamical systems},
        date={2005},
     journal={Ann. Inst. H. Poincar\'e Anal. Non Lin\'eaire},
      volume={22},
      number={6},
       pages={817\ndash 839},
}

\bib{AlvPin10}{article}{
      author={Alves, Jos{\'e}~F.},
      author={Pinheiro, Vilton},
       title={Gibbs-{M}arkov structures and limit laws for partially hyperbolic
  attractors with mostly expanding central direction},
        date={2010},
     journal={Adv. Math.},
      volume={223},
      number={5},
       pages={1706\ndash 1730},
}

\bib{AlvPumVig17}{article}{
      author={Alves, Jos{\'e}~F.},
      author={Pumari{\~n}o, Antonio},
      author={Vigil, Enrique},
       title={Statistical stability for multidimensional piecewise expanding
  maps},
        date={2017},
     journal={Proc. Amer. Math. Soc.},
      volume={145},
      number={7},
       pages={3057\ndash 3068},
}

\bib{AlvSou12}{article}{
      author={Alves, Jos{\'e}~F.},
      author={Soufi, Mohammad},
       title={Statistical stability and limit laws for {R}ovella maps},
        date={2012},
     journal={Nonlinearity},
      volume={25},
       pages={3527\ndash 3552},
}

\bib{AlvSou14}{article}{
      author={Alves, Jos{\'e}~F.},
      author={Soufi, Mohammad},
       title={Statistical stability of geometric {L}orenz attractors},
        date={2014},
     journal={Fund. Math.},
      volume={224},
      number={3},
       pages={219\ndash 231},
}

\bib{AlvVia02}{article}{
      author={Alves, Jos{\'e}~F.},
      author={Viana, Marcelo},
       title={Statistical stability for robust classes of maps with non-uniform
  expansion},
        date={2002},
     journal={Ergodic Theory Dynam. Systems},
      volume={22},
      number={1},
       pages={1\ndash 32},
}

\bib{Alv00}{article}{
      author={Alves, Jos{\'{e}}~Ferreira},
       title={{SRB measures for non-hyperbolic systems with multidimensional
  expansion}},
        date={2000},
     journal={Annales Scientifiques de l'Ecole Normale Superieure},
      volume={33},
      number={1},
       pages={1\ndash 32},
}

\bib{AntFroGal22}{article}{
      author={Antown, Fadi},
      author={Froyland, Gary},
      author={Galatolo, Stefano},
       title={Optimal linear response for {M}arkov {H}ilbert-{S}chmidt integral
  operators and stochastic dynamical systems},
        date={2022},
     journal={Journal of Nonlinear Science},
      volume={32},
}

\bib{Ara21}{article}{
      author={Ara\'ujo, V\'itor},
       title={On the statistical stability of families of attracting sets and
  the contracting {L}orenz attractor},
        date={2021},
     journal={Journal of Statistical Physics},
      volume={182},
}

\bib{AraPac09}{article}{
      author={Ara{\'{u}}jo, V\'itor},
      author={Pacifico, Maria~Jos{\'{e}}},
       title={{Physical measures for infinite-modal maps}},
        date={2009},
     journal={Fundamenta Mathematicae},
      volume={203},
      number={3},
       pages={211\ndash 262},
}

\bib{AraPin21}{article}{
      author={Ara{\'{u}}jo, V{\'{i}}tor},
      author={Pinheiro, Vilton},
       title={{Abundance of wild historic behavior}},
        date={2021},
     journal={Bulletin of the Brazilian Mathematical Society. New Series.},
      volume={52},
}

\bib{BahGalNis18}{article}{
      author={Bahsoun, Wael},
      author={Galatolo, Stefano},
      author={Nisoli, Isaia},
      author={Niu, Xiaolong},
       title={A rigorous computational approach to linear response},
        date={2018},
     journal={Nonlinearity},
      volume={31},
}

\bib{BahRuz17}{article}{
      author={Bahsoun, Wael},
      author={Ruziboev, Marks},
       title={On the statistical stability of lorenz attractors with a
  $c^{1+{\alpha}}$ stable foliation},
        date={2017},
}

\bib{BahRuzSau20}{article}{
      author={Bahsoun, Wael},
      author={Ruziboev, Marks},
      author={Saussol, Beno{\^\i}t},
       title={Linear response for random dynamical systems},
        date={2020},
     journal={Advances in Mathematics},
}

\bib{BahSau15}{article}{
      author={Bahsoun, Wael},
      author={Saussol, Beno{\^\i}t},
       title={Linear response in the intermittent family: differentiation in a
  weighted $c^0$-norm},
        date={2015},
}

\bib{BalTod16}{article}{
      author={Baladi, V.},
      author={Todd, M.},
       title={Linear response for intermittent maps},
        date={2016},
     journal={Communications in Mathematical Physics},
      volume={347},
       pages={857\ndash 874},
}

\bib{Bal14}{inproceedings}{
      author={Baladi, Viviane},
       title={Linear response, or else},
        date={2014},
   booktitle={Proceedings of the {I}nternational {C}ongress of
  {M}athematicians---{S}eoul 2014. {V}ol. {III}},
   publisher={Kyung Moon Sa, Seoul},
       pages={525\ndash 545},
}

\bib{BalBenSch15}{article}{
      author={Baladi, Viviane},
      author={Benedicks, Michael},
      author={Schnellmann, Daniel},
       title={Whitney--h{\"o}lder continuity of the srb measure for transversal
  families of smooth unimodal maps},
        date={2015},
     journal={Inventiones mathematicae},
}

\bib{BarKirNak20}{article}{
      author={Barrientos, Pablo~G.},
      author={Kiriki, Shin},
      author={Nakano, Yushi},
      author={Raibekas, Artem},
      author={Soma, Teruhiko},
       title={Historic behavior in non-hyperbolic homoclinic classes},
        date={2020},
     journal={Proceedings of the American Mathematical Society},
      volume={148},
       pages={1195\ndash 1206},
}

\bib{BenCar91}{article}{
      author={Benedicks, Michael},
      author={Carleson, Lennart},
       title={The dynamics of the {H}{\'e}non map},
        date={1991},
     journal={Ann. of Math. (2)},
      volume={133},
      number={1},
       pages={73\ndash 169},
}

\bib{BenYou93}{article}{
      author={Benedicks, Michael},
      author={Young, Lai-Sang},
       title={Sinai-{B}owen-{R}uelle measures for certain {H}{\'e}non maps},
        date={1993},
     journal={Invent. Math.},
      volume={112},
      number={3},
       pages={541\ndash 576},
}

\bib{BerZwe13}{article}{
      author={Berger, Arno},
      author={Zweim{\"{u}}ller, Roland},
       title={{Invariant measures for general induced maps and towers}},
        date={2013},
     journal={Discrete and Continuous Dynamical Systems},
      volume={33},
      number={9},
       pages={3885\ndash 3901},
}

\bib{BerBie22}{article}{
      author={Berger, Pierre},
      author={Biebler, Sebastien},
       title={Emergence of wandering stable components},
        date={2022},
     journal={Journal of the American Mathematical Society},
      volume={36},
}

\bib{BluYou19}{article}{
      author={Blumenthal, Alex},
      author={Young, Lai-Sang},
       title={Equivalence of physical and SRB measures in random dynamical
  systems},
        date={2019},
     journal={Nonlinearity},
      volume={32},
}

\bib{Bow75}{book}{
      author={Bowen, Rufus},
       title={{Equilibrium states and the ergodic theory of {A}nosov
  diffeomorphisms}},
        date={1975},
     series={Lecture Notes in Mathematics},
      number={470},
       publisher={Springer-Verlag, Berlin},
}

\bib{BowRue75}{article}{
      author={Bowen, Rufus},
      author={Ruelle, David},
       title={{The ergodic theory of {A}xiom {A} flows.}},
        date={1975},
     journal={Inventiones Mathematicae},
      volume={29},
      number={3},
       pages={181\ndash 202},
}

\bib{BruDemMel10}{article}{
      author={Bruin, Henk},
      author={Demers, Mark},
      author={Melbourne, Ian},
       title={Existence and convergence properties of physical measures for
  certain dynamical systems with holes},
        date={2010},
     journal={Ergodic Theory Dynam. Systems},
      volume={30},
      number={3},
       pages={687\ndash 728},
}

\bib{BruKelNow96}{article}{
      author={Bruin, Henk},
      author={Keller, Gerhard},
      author={Nowicki, Tomasz},
      author={van Strien, Sebastian},
       title={{Wild Cantor attractors exist}},
        date={1996},
     journal={Annals of Mathematics},
      volume={143},
      number={1},
       pages={97\ndash 130},
}

\bib{BruLuzStr03}{article}{
      author={Bruin, Henk},
      author={Luzzatto, Stefano},
      author={van Strien, Sebastian},
       title={{Decay of correlations in one-dimensional dynamics}},
        date={2003},
     journal={Annales Scientifiques de l'Ecole Normale Sup{\'{e}}rieure.
  Quatri{\`{e}}me S{\'{e}}rie},
      volume={36},
      number={4},
       pages={621\ndash 646},
}

\bib{Bur21}{article}{
      author={Burguet, David},
       title={{SRB} measures for ${C}^\infty$ surface diffeomorphisms},
        date={2021},
     journal={Preprint},
}

\bib{Buz00}{article}{
      author={Buzzi, J{\'e}r{\^o}me},
       title={Absolutely continuous invariant probability measures for
  arbitrary expanding piecewise r-analytic mappings of the plane},
        date={2000},
     journal={Ergodic Theory Dynam. Systems},
      volume={20},
      number={3},
       pages={697\ndash 708},
}

\bib{BuzCroSar22}{article}{
      author={Buzzi, J{\'e}r{\^o}me},
      author={Crovisier, Sylvain},
      author={Sarig, Omri},
       title={Another proof of burguet's existence theorem for srb measures of
  $c^\infty$ surface diffeomorphisms},
        date={2022},
     journal={Preprint},
}

\bib{Car93}{article}{
      author={Carvalho, Maria},
       title={Sinai-{R}uelle-{B}owen measures for {$N$}-dimensional derived
  from {A}nosov diffeomorphisms},
        date={1993},
     journal={Ergodic Theory Dynam. Systems},
      volume={13},
      number={1},
       pages={21\ndash 44},
}

\bib{Che99}{article}{
      author={Chernov, Nikolai},
       title={{Decay of correlations and dispersing billiards}},
        date={1999},
     journal={Journal of Statistical Physics},
      volume={94},
      number={3-4},
       pages={513\ndash 556},
}

\bib{CliDolPes16}{article}{
      author={Climenhaga, Vaughn},
      author={Dolgopyat, Dmitry},
      author={Pesin, Yakov},
       title={Non-stationary non-uniform hyperbolicity: Srb measures for
  dissipative maps},
        date={2016},
     journal={Communications in Mathematical Physics},
      volume={346},
}

\bib{CliLuzPes17}{article}{
      author={Climenhaga, Vaughn},
      author={Luzzatto, Stefano},
      author={Pesin, Yakov},
       title={The geometric approach for constructing {S}inai-{R}uelle-{B}owen
  measures},
        date={2017},
     journal={Journal of Statistical Physics},
      volume={166},
}

\bib{CliLuzPes23}{article}{
      author={Climenhaga, Vaughn},
      author={Luzzatto, Stefano},
      author={Pesin, Yakov},
       title={Srb measures and young towers for surface diffeomorphisms},
        date={2023},
     journal={Annales Henri Poincar{\'{e}}},
      volume={23},
}

\bib{ColTre88}{article}{
      author={Collet, Pierre},
      author={Tresser, Charles},
       title={Ergodic theory and continuity of the bowen-ruelle measure for
  geometrical lorenz flows},
        date={1988},
     journal={Fizika},
      volume={20},
      number={1},
       pages={33\ndash 48},
}

\bib{ColVar01}{article}{
      author={Colli, Eduardo},
      author={Vargas, Edson},
       title={Non-trivial wandering domains and homoclinic bifurcations},
        date={2001},
     journal={Ergodic Theory and Dynamical Systems},
}

\bib{CroYanZha20}{article}{
      author={Crovisier, Sylvain},
      author={Yang, Dawei},
      author={Zhang, Jinhua},
       title={Empirical measures of partially hyperbolic attractors},
        date={2020},
     journal={Communications in Mathematical Physics},
}

\bib{DemLiv08}{article}{
      author={Demers, Mark},
      author={Liverani, Carlangelo},
       title={{Stability of statistical properties in two-dimensional piecewise
  hyperbolic maps}},
        date={2008},
     journal={Transactions of the American Mathematical Society},
      volume={360},
      number={9},
       pages={4777\ndash 4814},
}

\bib{DiaHolLuz06}{article}{
      author={Diaz-Ordaz, Karla},
      author={Holland, Mark},
      author={Luzzatto, Stefano},
       title={Statistical properties of one-dimensional maps with critical
  points and singularities},
        date={2006},
     journal={Stochastics and Dynamics},
      volume={6},
      number={4},
}

\bib{Fre05}{article}{
      author={Freitas, Jorge~Milhazes},
       title={Continuity of {SRB} measure and entropy for
  {B}enedicks-{C}arleson quadratic maps},
        date={2005},
     journal={Nonlinearity},
      volume={18},
      number={2},
       pages={831\ndash 854},
}

\bib{FreTod08}{article}{
      author={Freitas, Jorge~Milhazes},
      author={Todd, Mike},
       title={{The statistical stability of equilibrium states for interval
  maps}},
        date={2008},
     journal={Nonlinearity},
      volume={22},
      number={2},
       pages={259\ndash 281},
}

\bib{Fro98}{article}{
      author={Froyland, Gary},
       title={{Approximating physical invariant measures of mixing dynamical
  systems in higher dimensions}},
        date={1998},
     journal={Nonlinear Analysis. Theory, Methods {\&} Applications. An
  International Multidisciplinary Journal. Series A: Theory and Methods},
      volume={32},
      number={7},
       pages={831\ndash 860},
}

\bib{Gal17a}{article}{
      author={Galatolo, Stefano},
       title={Quantitative statistical stability and convergence to
  equilibrium. an application to maps with indifferent fixed points},
        date={2017},
     journal={Chaos, Solitons {\&} Fractals},
      volume={103},
}

\bib{Gal18}{article}{
      author={Galatolo, Stefano},
       title={Quantitative statistical stability and speed of convergence to
  equilibrium for partially hyperbolic skew products},
        date={2018},
     journal={Journal de l'{\'E}cole polytechnique --- Math{\'e}matiques},
}

\bib{Gal21}{article}{
      author={Galatolo, Stefano},
       title={Self consistent transfer operators in a weak coupling regime.
  invariant measures, convergence to equilibrium, linear response and control
  of the statistical properties},
        date={2021},
}

\bib{GalGiu17}{article}{
      author={Galatolo, Stefano},
      author={Giulietti, Paolo},
       title={Linear response for dynamical systems with additive noise},
        date={2017},
}

\bib{GalLuc15}{article}{
      author={Galatolo, Stefano},
      author={Lucena, Rafael},
       title={Spectral gap and quantitative statistical stability for systems
  with contracting fibers and lorenz like maps},
        date={2015},
}

\bib{GalPol16}{article}{
      author={Galatolo, Stefano},
      author={Pollicott, Mark},
       title={Controlling the statistical properties of expanding maps},
        date={2016},
}

\bib{GalSed20}{article}{
      author={Galatolo, Stefano},
      author={Sedro, Julien},
       title={Quadratic response of random and deterministic dynamical
  systems},
        date={2020},
     journal={Chaos},
      volume={30},
       pages={023113},
}

\bib{GalSor22}{article}{
      author={Galatolo, Stefano},
      author={Sorrentino, Alfonso},
       title={Quantitative statistical stability and linear response for
  irrational rotations and diffeomorphisms of the circle},
        date={2022},
     journal={Discrete and Continuous Dynamical Systems},
      volume={42},
}

\bib{GanYanYanZhe21}{article}{
      author={Gan, Shaobo},
      author={Yang, Fan},
      author={Yang, Jiganag},
      author={Zheng, Rusong},
       title={Statistical properties of physical-like measures},
        date={2021},
     journal={Nonlinearity},
}

\bib{Her18}{incollection}{
      author={Herman, Michel},
       title={An example of non-convergence of birkhoff sums},
        date={2018},
   booktitle={Notes inachev{\'e}es de michael r. herman s{\'e}lectionn{\'e}es
  par jean-christophe yoccoz},
   publisher={Soci{\'e}t{\'e} Math{\'e}matique de France},
}

\bib{HofKel82}{article}{
      author={Hofbauer, Franz},
      author={Keller, Gerhard},
       title={Ergodic properties of invariant measures for piecewise monotonic
  transformations},
        date={1982},
     journal={Math. Z.},
      volume={180},
      number={1},
       pages={119\ndash 140},
}

\bib{HofKel90}{article}{
      author={Hofbauer, Franz},
      author={Keller, Gerhard},
       title={{Some remarks on recent results about $S$-unimodal maps}},
        date={1990},
     journal={Annales Henri Poincar{\'{e}}. A Journal of Theoretical and
  Mathematical Physics},
      volume={53},
      number={4},
       pages={413\ndash 425},
}

\bib{HofKel95}{incollection}{
      author={Hofbauer, Franz},
      author={Keller, Gerhard},
       title={{Quadratic maps with maximal oscillation}},
        date={1995},
   booktitle={Algorithms, fractals, and dynamics},
      editor={Takahashi, Y.},
   publisher={Springer, Boston, MA},
       pages={89\ndash 94},
}

\bib{JakNew95}{inproceedings}{
      author={Jakobson, Michael},
      author={Newhouse, Sheldon},
       title={Asymptotic measures for hyperbolic piecewise smooth mappings of a
  rectangle},
        date={1995},
   booktitle={G{\'e}om{\'e}trie complexe et syst{\`e}mes dynamiques - colloque
  en l'honneur d'adrien douady orsay},
}

\bib{Kel82}{article}{
      author={Keller, Gerhard},
       title={Stochastic stability in some chaotic dynamical systems},
        date={1982},
     journal={Monatsh. Math.},
      volume={94},
      number={4},
       pages={313\ndash 333},
}

\bib{Kel04}{article}{
      author={Keller, Gerhard},
       title={{Completely mixing maps without limit measure}},
        date={2004},
     journal={Colloquium Mathematicum},
      volume={100},
      number={1},
       pages={73\ndash 76},
}

\bib{KirLiNak22}{article}{
      author={Kiriki, Shin},
      author={Li, Xiaolong},
      author={Nakano, Yushi},
      author={Soma, Teruhiko},
       title={Abundance of observable lyapunov irregular sets},
        date={2022},
     journal={Communications in Mathematical},
      volume={Physics},
       pages={1\ndash 29},
}

\bib{KirNakSom19}{article}{
      author={Kiriki, Shin},
      author={Nakano, Yushi},
      author={Soma, Teruhiko},
       title={Historic behaviour for nonautonomous contraction mappings},
        date={2019},
     journal={Nonlinearity},
      volume={32},
       pages={1111\ndash 1124},
}

\bib{KirNakSom21}{article}{
      author={Kiriki, Shin},
      author={Nakano, Yushi},
      author={Soma, Teruhiko},
       title={Historic and physical wandering domains for wild
  blender-horseshoes},
        date={2021},
     journal={Preprint},
}

\bib{KirNakSom22}{article}{
      author={Kiriki, Shin},
      author={Nakano, Yushi},
      author={Soma, Teruhiko},
       title={Emergence via non-existence of averages},
        date={2022},
     journal={Advances in Mathematics},
      volume={400},
       pages={1\ndash 30},
}

\bib{KirSom17}{article}{
      author={Kiriki, Shin},
      author={Soma, Teruhiko},
       title={Takens' last problem and existence of non-trivial wandering
  domains},
        date={2017},
     journal={Advances in Mathematics,},
      volume={306},
       pages={pp.},
}

\bib{Kor16}{article}{
      author={Korepanov, Alexey},
       title={Linear response for intermittent maps with summable and
  nonsummable decay of correlations},
        date={2016may},
     journal={Nonlinearity},
      volume={29},
      number={6},
       pages={1735\ndash 1754},
}

\bib{Led84}{article}{
      author={Ledrappier, F.},
       title={Propri{\'e}t{\'e}s ergodiques des mesures de {S}inai},
        date={1984},
     journal={Inst. Hautes {\'E}tudes Sci. Publ. Math.},
      number={59},
       pages={163\ndash 188},
}

\bib{Lep98}{article}{
      author={Leplaideur, Renaud},
       title={{Existence d'une mesure de Sinai-Ruelle-Bowen pour des
  syst\'emes non uniform\'ement hyperboliques}},
        date={1998},
     journal={Comptes Rendus de l'Acad{\'{e}}mie des Sciences. S{\'{e}}rie I.
  Math{\'{e}}matique},
      volume={326},
      number={10},
       pages={1217\ndash 1220},
}

\bib{Lor63}{article}{
      author={Lorenz, Edward~N.},
       title={{Deterministic non-periodic flow}},
        date={1963},
     journal={J. Atmos. Sci.},
      volume={20},
       pages={130\ndash 141},
}

\bib{LuzTuc99}{article}{
      author={Luzzatto, Stefano},
      author={Tucker, Warwick},
       title={Non-uniformly expanding dynamics in maps with singularities and
  criticalities},
        date={1999},
     journal={Inst. Hautes \'Etudes Sci. Publ. Math.},
      number={89},
       pages={179\ndash 226 (2000)},
}

\bib{LuzVia00}{article}{
      author={Luzzatto, Stefano},
      author={Viana, Marcelo},
       title={Positive {L}yapunov exponents for {L}orenz-like families with
  criticalities},
        date={2000},
     journal={Ast\'erisque},
      number={261},
       pages={xiii, 201\ndash 237},
}

\bib{MarWin16}{article}{
      author={Martens, Marco},
      author={Winckler, Bj{\"o}rn},
       title={Physical measures for infinitely renormalizable lorenz maps},
        date={2016},
     journal={Ergodic Theory and Dynamical Systems},
}

\bib{Met00}{article}{
      author={Metzger, Roger~J.},
       title={Sinai-{R}uelle-{B}owen measures for contracting {L}orenz maps and
  flows},
        date={2000},
     journal={Ann. Inst. H. Poincar{\'e} Anal. Non Lin{\'e}aire},
      volume={17},
      number={2},
       pages={247\ndash 276},
}

\bib{PesSin82}{article}{
      author={Pesin, Ya.~B.},
      author={Sinai, Ya.~G.},
       title={Gibbs measures for partially hyperbolic attractors},
        date={1982},
     journal={Ergodic Theory Dynam. Systems},
      volume={2},
      number={3-4},
       pages={417\ndash 438 (1983)},
}

\bib{Pin06}{article}{
      author={Pinheiro, Vilton},
       title={Sinai-{R}uelle-{B}owen measures for weakly expanding maps},
        date={2006},
     journal={Nonlinearity},
      volume={19},
      number={5},
       pages={1185\ndash 1200},
}

\bib{Poi03}{book}{
      author={Poincar{\'{e}}, Henri},
       title={Science et methode},
        date={1903},
}

\bib{RivShe10}{article}{
      author={Rivera-Letelier, Juan},
      author={Shen, Weixiao},
       title={{Statistical properties of one-dimensional maps under weak
  hyperbolicity assumptions}},
        date={2010},
     journal={Ann. Sci. Éc. Norm. Supér. (4) },
      volume={47},
      number={6},
      date={2014},
      pages={1027\ndash 1083},
}

\bib{Rov93}{article}{
      author={Rovella, Alvaro},
       title={The dynamics of perturbations of the contracting {L}orenz
  attractor},
        date={1993},
     journal={Bol. Soc. Brasil. Mat. (N.S.)},
      volume={24},
      number={2},
       pages={233\ndash 259},
}

\bib{Rue76}{article}{
      author={Ruelle, David},
       title={{A measure associated with axiom-{A} attractors}},
        date={1976},
     journal={Amer. J. Math.},
      volume={98},
      number={3},
       pages={619\ndash 654},
}

\bib{Rue09}{article}{
      author={Ruelle, David},
       title={{A review of linear response theory for general differentiable
  dynamical systems}},
        date={2009},
     journal={Nonlinearity},
      volume={22},
      number={4},
       pages={855\ndash 870},
}

\bib{Sin72}{article}{
      author={Sinai, Yakov G.},
       title={{Gibbs measures in ergodic theory}},
        date={1972},
     journal={Uspehi Mat. Nauk},
      volume={27},
      number={4},
       pages={21\ndash 64},
}

\bib{Tak12}{article}{
      author={Takahasi, Hiroki},
       title={{Statistical properties of nonuniformly expanding 1d maps with
  logarithmic singularities}},
        date={2012},
     journal={Nonlinearity},
      volume={25},
}

\bib{Tak94}{article}{
      author={Takens, Floris},
       title={Heteroclinic attractors: time averages and moduli of topological
  conjugacy.},
        date={1994},
     journal={Bullettin of the Brazilian Mathematical Society},
      volume={25},
}

\bib{Tak08}{article}{
      author={Takens, Floris},
       title={Orbits with historic behaviour, or nonexistence of averages},
        date={2008},
     journal={Nonlinearity},
      volume={21},
}

\bib{Tal20}{article}{
      author={Talebi, Amin},
       title={Statistical (in)stability and non-statistical dynamics},
        date={2020},
     journal={Preprint},
}

\bib{Tal22}{article}{
      author={Talebi, Amin},
       title={Non-statistical rational maps},
        date={2022},
     journal={Mathematische Zeitschrift},
}

\bib{Thu01}{article}{
      author={Thunberg, Hans},
       title={{Unfolding of chaotic unimodal maps and the parameter dependence of
  natural measures}},
        date={2001},
     journal={Nonlinearity},
     volume={14}
      number={2},
       pages={323\ndash 337},
}

\bib{Tsu96}{article}{
      author={Tsujii, Masato},
       title={{Monotonicity in one-dimensional dynamical systems}},
        date={1996},
     journal={Surikaisekikenkyusho Kokyuroku},
      number={938},
       pages={1\ndash 12},
}

\bib{Tsu05}{article}{
      author={Tsujii, Masato},
       title={Physical measures for partially hyperbolic surface
  endomorphisms},
        date={2005},
     journal={Acta Math.},
      volume={194},
      number={1},
       pages={37\ndash 132},
}

\bib{Vas07}{article}{
      author={V{\'{a}}squez, Carlos~H.},
       title={Statistical stability for diffeomorphisms with dominated
  splitting},
        date={2007},
     journal={Ergodic Theory Dynamical Systems},
      volume={27},
      number={1},
       pages={253\ndash 283},
}

\bib{Vec22}{article}{
      author={Veconi, Dominic},
       title={SRB measures of singular hyperbolic attractors},
        date={2022},
     journal={Discrete and Continuous Dynamical Systems},
      volume={42},
}

\bib{Via99}{article}{
      author={Viana, Marcelo},
       title={{Lecture notes on attractors and physical measures}},
        date={1999},
      volume={8},
       pages={iv+101},
}

\bib{ViaYan13}{article}{
      author={Viana, Marcelo},
      author={Yang, Jiagang},
       title={{Physical measures and absolute continuity for one-dimensional
  center direction}},
        date={2013},
     journal={Annales de l'Institut Henri Poincare. Annales: Analyse Non
  Lineaire/Nonlinear Analysis},
      volume={30},
      number={5},
       pages={845\ndash 877},
}

\bib{You02}{article}{
      author={Young, Lai-Sang},
       title={What are {SRB} measures, and which dynamical systems have them?},
        date={2002},
     journal={J. Statist. Phys.},
      volume={108},
      number={5-6},
       pages={733\ndash 754},
}

\end{biblist}
\end{bibdiv}

\end{document}